\let\oldmarginpar\marginpar
\renewcommand\marginpar[1]{\-\oldmarginpar[\raggedleft\footnotesize #1]%
{\raggedright\footnotesize #1}}
\theoremstyle{plain}
\newtheorem{thm}[equation]{Theorem}
\newtheorem{lem}[equation]{Lemma}
\newtheorem{prop}[equation]{Proposition}
\newtheorem{cor}[equation]{Corollary}
\theoremstyle{definition}
\newtheorem{defn}[equation]{Definition}
\newtheorem{eg}[equation]{Example}
\theoremstyle{remark}
\numberwithin{equation}{section}
\newcommand{\R}{\mathbb{R}}
\newcommand{\N}{\mathbb{N}}
\newcommand{\BV}{\ensuremath{\mathrm{BV}}}
\newcommand{\V}{\ensuremath{\mathrm{V}}}
\newcommand{\EV}{\ensuremath{\mathrm{EV}}}
\newcommand{\Volim}{\ensuremath{\mathrm{\overline V}}}
\newcommand{\Vulim}{\ensuremath{\mathrm{\underline V}}}
\newcommand{\RBVz}{\ensuremath{\mathrm{RBV}_0}}
\newcommand{\RBV}{\ensuremath{\mathrm{RBV}}}
\newcommand{\AC}{\ensuremath{\mathrm{AC}}}
\newcommand{\Lip}{\ensuremath{\mathrm{Lip}}}
\renewcommand{\phi}{\varphi}
\renewcommand{\epsilon}{\varepsilon}
\def\le{\leqslant}
\def\leq{\leqslant}
\def\ge{\geqslant}
\def\geq{\geqslant}
\def\phi{\varphi}
\def\rho{\varrho}
\def\vartheta{\theta}
\newcommand{\Phiw}{\Phi_{\text{\rm w}}}
\newcommand{\Phic}{\Phi_{\text{\rm c}}}
\newcommand{\inc}[1]{\hyperref[def:aInc]{{\normalfont(Inc){\ensuremath{_{#1}}}}}}
\newcommand{\dec}[1]{\hyperref[def:aDec]{{\normalfont(Dec){\ensuremath{_{#1}}}}}}
\newcommand{\ainc}[1]{\hyperref[def:aInc]{{\normalfont(aInc){\ensuremath{_{#1}}}}}}
\newcommand{\adec}[1]{\hyperref[def:aDec]{{\normalfont(aDec){\ensuremath{_{#1}}}}}}
\newcommand{\adeci}[1]{\hyperref[def:aDeci]{{\normalfont(aDec){\ensuremath{_{#1}^\infty}}}}}
\newcommand{\azero}{\hyperref[def:a0]{{\normalfont(A0)}}}
\newcommand{\aone}{\hyperref[def:a1]{{\normalfont(A1)}}}
\newcommand{\vaone}{\hyperref[def:va1]{{\normalfont(VA1)}}}
\newcommand{\aones}[1]{\hyperref[def:a1s]{{\normalfont(A1-{\ensuremath{{#1}})}}}}
\date{\today}
\begin{document}

\title{Riesz spaces with generalized Orlicz growth}
\author{Peter Hästö, Jonne Juusti and Humberto Rafeiro}

\subjclass[2020]{26A45; 26B30} 
\keywords{Riesz variation, Riesz--Medvedev vatiations, generalized Orlicz space, Musielak--Orlicz spaces, nonstandard growth, variable exponent, double phase}

\begin{abstract}
We consider a Riesz $\phi$-variation for functions $f$ defined on the real line when 
$\phi:\Omega\times[0,\infty)\to[0,\infty)$ is a generalized $\Phi$-function. 
We show that it generates a quasi-Banach space and derive an explicit 
formula for the modular when the function $f$ has bounded variation. The resulting $BV$-type energy has 
previously appeared in image restoration models. We 
generalize and improve previous results in the variable exponent and 
Orlicz cases and answer a question regarding the Riesz--Medvedev variation 
by Appell, Bana\'s and Merentes [\emph{Bounded Variation and Around}, 
Studies in Nonlinear Analysis and Applications, Vol.\ 17, De Gruyter, Berlin/Boston, 2014].
\end{abstract}

\maketitle

%
\section{Introduction}

The classical total variation of $f:[a,b]\to \R$, defined as 
\[
\sup_{a=x_1<\cdots<x_n=b} \sum_{k=1}^{n-1} |f(x_{k+1}) - f(x_k)|,
\]
is an intuitive way to measure the variation of a function in one dimension. 
Appell, Bana\'s and Merentes provide many different versions of the variation, including what they call the Riesz--Medvedev variation (\cite[Section~2.4]{AppBM14}, originally from \cite{Med53}):
\[
\sup_{a=x_1<\cdots<x_n=b} \sum_{k=1}^{n-1} \phi \bigg(\frac{|f(x_{k+1}) - f(x_k)|}{|x_{k+1}-x_k|} \bigg) |x_{k+1}-x_k|. 
\]
When $\phi(t)=t$, this reduces to the normal total variation, above. 
In 2016, Castillo, Guzm\'an and Rafeiro \cite{CasGR16} generalized the Riesz--Medvedev variation to the variable exponent case. 
In this article, we further extend and improve their result to the generalized Orlicz case
and answer a question regarding the Riesz--Medvedev variation 
by Appell, Bana\'s and Merentes \cite{AppBM14}. 

Generalized Orlicz spaces, also known as Musielak--Orlicz spaces, have been studied 
with renewed intensity recently \cite{HarH19, LanM19, MizOS20, OhnS21} as have related PDE 
\cite{BenHHK21, BenK20, Bui21, ChlZ_pp, ChlGZ19, HarHL21, KarL_pp, WanLZ19}. 
A contributing factor is that they cover 
both the variable exponent case $\phi(x,t):=t^{p(x)}$ \cite{DieHHR11} and the double phase case $\phi(x,t):=t^p+a(x)t^q$ \cite{BarCM18}, 
as well as their many variants:
perturbed variable exponent, 
Orlicz variable exponent, 
degenerate double phase, 
Orlicz double phase, 
variable exponent double phase, 
triple phase and 
double variable exponent. 
For references see \cite{HasO22}.

Chen, Levine and Rao \cite{CheLR06} proposed a generalized Orlicz model for image restoration with the energy function 
\[
\phi(x, t) := 
\begin{cases}
\frac1{p(x)} t^{p(x)}, & \text{when } t\le 1, \\
t - 1-\frac1{q(x)}, & \text{when } t>1.
\end{cases}
\]
Based on input $u_0$, one seeks to minimize the sum of the regularization and the fidelity term:
\[
\int_\Omega \phi(x, |\nabla u|) + |u-u_0|^2\, dx.
\]
The variable exponent $p$ is chosen to be close to $1$ in areas of potential 
edges in the image and close to $2$ where no edges are expected. This allows us 
to avoid the so-called stair-casing effect whereby artificial edges are introduces in the image 
restoration process. 

A feature of their functional is that $\phi(x,t)\approx t$ for large $t$. Thus Chen, Levine and Rao could use the classical space 
$\BV(\Omega)$ directly. Li, Li and Pi \cite{LiLP10} suggested an image restoration model 
with variable exponent energy restricted away from $1$, so that no $\BV$-spaces are needed.
In \cite{HarH21, HarHLT13}, we considered pure variable exponent and double phase 
versions of this model with $p\to 1$. 
In this case, we cannot use the space $\BV(\Omega)$, and are led to the regularization terms 
\[
\int_\Omega |\nabla f|^{p(x)}\, dx + |D^sf|(\{p=1\}). 
\quad\text{and}\quad
\int_\Omega |\nabla f| + a(x)\,|\nabla f|^q\, dx + |D^sf|(\{a=0\}), 
\]
where $\nabla f$ and $D^sf$ are the absolutely continuous and singular parts of the derivative, respectively. 
Here the singular part of the derivative (i.e.\ the edges in the image) is concentrated on the sets 
$\{p=1\}$ or $\{a=0\}$ and the exponent $p$ or coefficient $a$ should be chosen accordingly. 

The papers \cite{CheLR06, HarH21, HarHLT13} are all based on special structure of $\phi$. 
The problem of defining a $\BV$-type space based on generalized Orlicz growth has been recently 
attacked in \cite{EleHH_pp0, EleHH_pp}. In this paper we show that the Riesz $\phi$-variation 
gives the aforementioned energies in the variable exponent and double phase cases 
(Corollaries~\ref{cor:preciseBVpx} and \ref{cor:preciseBVdp}). This provides support for 
our formulation of the Riesz $\phi$-variation as well as the generalized Orlicz growth 
models of image restoration.

To state some corollaries of the main result (Theorem~\ref{thm:preciseBV}) 
we define some variants of the Riesz $\phi$-variation in generalized Orlicz spaces. For further definitions see the next section.

\begin{defn} \label{def:variation}
Let $I\subset \R$ be a closed interval, $\phi\in \Phiw(I)$ and $f: I\to \R$. We define the functional $\V^\phi_{I}: \mathbb R^I \to [0,\infty]$ by \[
\V^\phi_{I}(f)
:=
\sup_{(I_k)} \V^{\phi} (f, (I_k))
:=
\sup_{(I_k)} \sum_{k=1}^n \phi^+_{I_k} \bigg(\frac{|\Delta_kf|}{|I_k|}  \bigg) |I_k|,
\]
where the supremum is taken over all partitions $(I_k)$ of $I$ by closed intervals (non-degenerate and with disjoint interiors). 
Here $\Delta_k f := \Delta f(I_k) := f(a_k) - f(b_k)$ for $I_k=[a_k, b_k]$. 
For a partition $(I_k)$ of $I$ we denote $|(I_k)|=\max\{|I_k|\}$ and using it we define 
\[
\Volim^\phi_I(f)
:=
\limsup_{|(I_k)|\to 0} \sum_{k=1}^n \phi^+_{I_k} \bigg(\frac{|\Delta_kf|}{|I_k|}  \bigg) |I_k|
\qquad\text{and}\qquad
\Vulim^\phi_I(f)
:=
\limsup_{|(I_k)|\to 0} \sum_{k=1}^n \phi^-_{I_k} \bigg(\frac{|\Delta_kf|}{|I_k|}  \bigg) |I_k|.
\]
\end{defn}

When there is no dependence on $x$, all of these variants give the same end result (Lemma~\ref{lem:stdGrowthLimit}). 
However, with $x$-dependence, the variant with the limit superior gives a more precise result.

\begin{cor}[Variable exponent]\label{cor:preciseBVpx}
Let $f\in \BV(I)$ be left-continuous. If $p:\Omega\to [1,\infty)$ is bounded and satisfies the 
strong $\log$-Hölder condition
\[
\lim_{x\to y} |p(x)-p(y)| \log(e+\tfrac1{|x-y|}) = 0,
\]
uniformly in $y\in\Omega$, then, for $\phi(x,t):=t^{p(x)}$, we have $|D^sf|(\{p>1\})=0$ and
\[
\Volim^\phi_{I}(f) = \int_I |f'|^{p(x)}\, dx + |D^sf|(\{p=1\}). 
\]
\end{cor}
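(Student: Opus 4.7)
The plan is to derive this corollary by specializing the main result Theorem~\ref{thm:preciseBV} to $\phi(x,t) := t^{p(x)}$ and using the strong $\log$-Hölder condition on $p$ to verify the required hypotheses. The strong $\log$-Hölder condition is a vanishing-oscillation version of the standard $\log$-Hölder condition; its role is to make the ratio $\phi^+_{I_k}/\phi^-_{I_k}$ on short intervals $I_k$ tend to $1$ uniformly, since on such an interval of length $r$ this ratio equals $t^{p^+_{I_k}-p^-_{I_k}}$ and the strong $\log$-Hölder assumption forces $r^{p^+_{I_k}-p^-_{I_k}} \to 1$. This is exactly what is needed to ensure that the variants $\Volim$ and $\Vulim$ in Definition~\ref{def:variation} produce the same value, so that one may work with $\phi^+$ or $\phi^-$ interchangeably.

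First I would check that $\phi(x,t) = t^{p(x)}$ lies in the subclass of $\Phiw(I)$ assumed by Theorem~\ref{thm:preciseBV}: for variable exponents with bounded, strongly $\log$-Hölder continuous $p$, the standard structural conditions used throughout generalized Orlicz theory are classical. Then Theorem~\ref{thm:preciseBV} expresses $\Volim^\phi_I(f)$ as an absolutely continuous integral $\int_I \phi(x,|f'(x)|)\,dx$ plus a singular contribution governed by the growth of $\phi$ at infinity. For $\phi(x,t) = t^{p(x)}$ the integral reduces directly to $\int_I |f'|^{p(x)}\,dx$.

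The key step is identifying the singular contribution. The recession behaviour $\lim_{s\to\infty}\phi(x,st)/s$ equals $t$ at points with $p(x)=1$ and $+\infty$ at points with $p(x)>1$. Hence the singular part of $\Volim^\phi_I(f)$ splits as $|D^sf|(\{p=1\})$ plus an infinite penalty charged against $|D^sf|(\{p>1\})$. Either this penalty vanishes (because $|D^sf|(\{p>1\}) = 0$) and the stated identity holds, or else both sides equal $+\infty$. Taking a test partition with arbitrarily fine mesh at any point of the support of $D^sf\llcorner\{p>1\}$ makes the corresponding terms $(|\Delta_k f|/|I_k|)^{p^+_{I_k}}|I_k|$ blow up, forcing the left-hand side to be infinite as soon as $|D^sf|(\{p>1\})>0$; consistency with the right-hand side thus delivers both conclusions of the corollary.

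The main obstacle I anticipate is unpacking the singular-part term produced by the general theorem, since Theorem~\ref{thm:preciseBV} presumably phrases it in terms of an asymptotic structure of $\phi$ itself rather than an explicit recession function, so one must verify that for our choice of $\phi$ this asymptotic structure produces the clean split described above. A secondary technicality is the role of left-continuity of $f$: it pins down the value of $f$ at partition endpoints so that the sums in Definition~\ref{def:variation} capture the jump part of $D^sf$ with the correct weight and no mass is double-counted when two subintervals share an endpoint.
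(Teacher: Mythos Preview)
Your proposal is correct and matches the paper's approach: check that $\phi(x,t)=t^{p(x)}$ satisfies the hypotheses of Theorem~\ref{thm:preciseBV} (strong $\log$-H\"older with bounded $p$ gives \vaone{} as in Example~\ref{ex:variable}, and boundedness of $p$ gives \dec{p^+}), then compute $\phi'_\infty(x)=\lim_{t\to\infty}t^{p(x)-1}$, which equals $1$ on $\{p=1\}$ and $\infty$ on $\{p>1\}$, so that $\int_I\phi'_\infty\,d|D^sf|=|D^sf|(\{p=1\})+\infty\cdot|D^sf|(\{p>1\})$. Your discussion of the $\phi^+/\phi^-$ ratio and the direct partition blow-up argument are superfluous once Theorem~\ref{thm:preciseBV} is invoked, but they are not incorrect.
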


\begin{cor}[Double phase]\label{cor:preciseBVdp}
Let $f\in \BV(I)$ be left-continuous. If $a:\Omega\to [0,\infty)$ is bounded and $\alpha$-Hölder continuous 
with
\[
q < 1 + \frac\alpha n
\]
then, for $\phi(x,t):=t + a(x)t^q$, we have $|D^sf|(\{a>0\})=0$ and
\[
\Volim^\phi_{I}(f) = \int_I |f'| + a(x)\,|f'|^q\, dx + |D^sf|(\{a=0\}). 
\]
\end{cor}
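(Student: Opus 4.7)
The plan is to obtain the corollary by specializing the main result, Theorem~\ref{thm:preciseBV}, to the double phase generalized $\Phi$-function $\phi(x,t) := t + a(x) t^q$. The work then splits into verifying the hypotheses of that theorem for this particular $\phi$ and reading off the specialized form of the conclusion.

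\emph{Admissibility of $\phi$.} First I would confirm that $\phi \in \Phiw(\Omega)$: for each $x$ it is convex, increasing and vanishes at $0$, and for each $t$ it is measurable in $x$ since $a$ is continuous. The substantive hypothesis of Theorem~\ref{thm:preciseBV} is a vanishing \aone-type condition on $\phi$. In the double phase setting this corresponds precisely to the classical balance $q < 1 + \alpha/n$ between the exponent gap $q - 1$ and the Hölder regularity $\alpha$ of $a$; the \emph{strict} inequality promotes this to the vanishing variant \vaone\ that is needed to get the limit-superior version of the formula rather than only bounds on $\V$ and $\Vulim$. I would cite this implication from the double phase literature referenced in the introduction rather than reprove it.

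\emph{Reading off the conclusion.} Theorem~\ref{thm:preciseBV} then gives a decomposition of the form
\[
\Volim^\phi_I(f) = \int_I \phi(x, |f'(x)|)\, dx + |D^sf|(S_\phi),
\]
where $S_\phi$ is the set of points at which $\phi(x,\cdot)$ has at most linear growth at infinity, together with the complementary regularity statement that $|D^sf|$ must vanish off $S_\phi$ whenever the left-hand side is finite. For $\phi(x,t) = t + a(x) t^q$ with $q > 1$, the ratio $\phi(x,t)/t = 1 + a(x) t^{q-1}$ is bounded in $t$ exactly when $a(x) = 0$, so $S_\phi = \{a = 0\}$ and its complement is $\{a > 0\}$. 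Substituting $\phi(x, |f'|) = |f'| + a(x)|f'|^q$ into the decomposition delivers both announced assertions simultaneously.

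The main obstacle is the admissibility step: one must match the Hölder inequality $q < 1 + \alpha/n$ with the exact vanishing \aone-type regularity demanded by Theorem~\ref{thm:preciseBV} for the pointwise $\Volim$-version. Once this matching is in place, the deduction is purely algebraic substitution, and the identification of $S_\phi$ with $\{a = 0\}$ is immediate from the large-$t$ behaviour of $\phi$.
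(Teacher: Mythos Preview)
Your approach is correct and matches the paper's: verify the hypotheses of Theorem~\ref{thm:preciseBV} for the double phase $\phi$ and read off the conclusion. Two small points to tighten: you should also check \dec{} (immediate, since $\phi(x,t)/t^q = t^{1-q}+a(x)$ is decreasing), and the singular term in Theorem~\ref{thm:preciseBV} is literally $\int_I \phi'_\infty\,d|D^sf|$ rather than $|D^sf|(S_\phi)$---the paper computes $\phi'_\infty = 1+\infty\chi_{\{a>0\}}$, from which both the formula and the vanishing of $|D^sf|$ on $\{a>0\}$ follow exactly as you describe.
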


We are also able to answer the question posed by Appell, Bana\'s and Merentes in \cite[p.~168]{AppBM14} 
with the help of the corollary for the Orlicz case.

\begin{cor}[Orlicz]\label{cor:orlicz} 
Let $\phi\in\Phic$ and $K:=\lim_{t\to \infty}\frac{\phi(t)}t$. Suppose that 
$V^\phi_I(f)<\infty$. 
\begin{enumerate}
\item
If $K<\infty$ and $f$ is left-continuous, then 
$f\in \BV(I)$ and 
\[
V^\phi_I(f) = \int_I \phi(|f'|)\, dx + K\, |D^sf|(I).
\]
\item
If $K=\infty$, then $f$ is absolutely continuous and $\V^\phi_I(f) = \rho_\phi(|f'|)$. 
\end{enumerate}
\end{cor}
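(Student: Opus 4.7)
The plan is to specialize Theorem~\ref{thm:preciseBV} to the $x$-independent Orlicz setting. Since $\phi$ does not depend on $x$, we have $\phi^+_{I_k}=\phi^-_{I_k}=\phi$ in Definition~\ref{def:variation}, so Lemma~\ref{lem:stdGrowthLimit} identifies the three variants, $\V^\phi_I(f)=\Volim^\phi_I(f)=\Vulim^\phi_I(f)$. This is what allows us to apply the main theorem, whose conclusion is phrased for $\Volim^\phi_I$, to the quantity $\V^\phi_I$ appearing in the statement.

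Next I would show that $f\in\BV(I)$. The case $K=0$ forces $\phi\equiv 0$ by convexity and the statement is trivial, so assume $K>0$. Convexity of $\phi$ together with $\phi(t)/t\to K$ yields constants $c,C>0$ with $\phi(t)\ge ct-C$ for all $t\ge 0$; summing over an arbitrary partition $(I_k)$ of $I$,
\[
\sum_k \phi\!\bigg(\frac{|\Delta_k f|}{|I_k|}\bigg)|I_k| \;\ge\; c\sum_k|\Delta_k f| \;-\; C|I|,
\]
so $\V^\phi_I(f)<\infty$ forces $\sum_k|\Delta_k f|$ to be uniformly bounded, whence $f\in\BV(I)$. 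The regularity hypotheses on $\phi$ required by Theorem~\ref{thm:preciseBV} (uniform/log-Hölder-type continuity in $x$) are vacuous in the Orlicz case, so the theorem yields
\[
\V^\phi_I(f) \;=\; \int_I \phi(|f'|)\,dx \,+\, K\,|D^sf|(I),
\]
with the coefficient $K=\lim_{t\to\infty}\phi(t)/t$ appearing as the recession weight on the singular part.

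For part (1), with $K<\infty$, this is exactly the claimed identity. For part (2), where $K=\infty$, the same formula must be read as forcing the singular term to vanish: finiteness of $\V^\phi_I(f)$ implies $|D^sf|(I)=0$, so $f$ has neither a jump part nor a Cantor part. A left-continuous $\BV$-function with vanishing singular part is continuous and its distributional derivative is absolutely continuous with respect to Lebesgue measure, hence $f\in\AC(I)$; then the classical derivative $f'$ is the Radon--Nikodym density of $Df$, and the identity reduces to $\V^\phi_I(f)=\int_I\phi(|f'|)\,dx=\rho_\phi(|f'|)$.

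The substantive work is carried out in Theorem~\ref{thm:preciseBV}; the only remaining obstacles at this level are the preliminary $\BV$ estimate, which relies on at least linear growth of $\phi$ at infinity (i.e.\ $K>0$), and in the super-linear case correctly interpreting the recession term as $+\infty\cdot|D^sf|(I)$ so that finiteness of $\V^\phi_I(f)$ forces $|D^sf|=0$, after which left-continuity upgrades $\BV$ to $\AC$.
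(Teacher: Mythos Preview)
Your reduction to Theorem~\ref{thm:preciseBV} via Lemma~\ref{lem:stdGrowthLimit}, reading off $\phi'_\infty\equiv K$, is exactly the paper's argument; you have in fact supplied more detail than the paper does (the preliminary $\BV$ bound from linear growth, and the explicit identification $\V^\phi_I=\Volim^\phi_I$).

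Two points need more care, and the paper is equally terse on both. First, Theorem~\ref{thm:preciseBV} also assumes \dec{}, which is a growth condition in $t$, not an $x$-regularity hypothesis; it is \emph{not} ``vacuous in the Orlicz case'' and is not assumed in the corollary. In the $x$-independent setting the proof of Theorem~\ref{thm:preciseBV} can indeed be run without \dec{}: the passage through Lemma~\ref{lem:phiMinusLimit} is trivial since $\phi^+=\phi^-$, and in place of \eqref{eq:alpha+beta} one may use the sharper convexity estimate $\phi(a+b)\le\phi(a)+\phi'_\infty\cdot b$---but this must be said, not asserted as automatic. Second, in part~(2) you invoke Theorem~\ref{thm:preciseBV}, which assumes $f$ is left-continuous, yet the hypothesis of part~(2) does not. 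The missing step is to observe directly that $K=\infty$ together with $\V^\phi_I(f)<\infty$ already forces continuity: a discontinuity of size $J>0$ would contribute a term of order $r\,\phi(J/r)=J\cdot\frac{\phi(J/r)}{J/r}\to\infty$ as the containing interval shrinks. Once continuity (hence left-continuity) is established, Theorem~\ref{thm:preciseBV} applies and your argument goes through.
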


To understand their question we recall Definition~2.11 from \cite{AppBM14}:
\[
\infty_p :=  \bigg\{\phi\in\Phic \,\bigg|\,\lim_{t\to\infty} \frac{\phi(t)}{t^p} = \infty \bigg\}.
\]
In \cite[Proposition~2.57]{AppBM14} it is shown that $\V^\phi_I$ and $\V^1_I$ generate the same space 
when $\phi\not\in \infty_1$. We use the convention that when $\phi$ is replaced by a number $p$, it indicates the 
case $\phi(x,t)=t^p$. Appell, Bana\'s and Merentes then ask whether $\V^\phi_I$ and $\V^p_I$ generate the 
same space when $\phi\not\in\infty_p$ for some $p>1$. 

However, there is not a complete analogy between the cases $p=1$ and $p>1$. Since $\phi$ is convex 
and $\phi(0)=0$, the ratio $\frac{\phi(t)}t$ is increasing so its limit always exists. Thus 
$\phi\not\in \infty_1$ is equivalent to $\displaystyle K:=\lim_{t\to \infty}\frac{\phi(t)}t < \infty$.
The same is not true when $p>1$. In fact, $\phi\not\in \infty_p$
if and only if
\[
\liminf_{t\to\infty}\frac{\phi(t)}{t^p}< \infty.
\]
However, this condition is satisfies for instance by $\phi(t)=t$ which generates the 
space $\BV(I)$ regardless of the value of $p$. 

In particular, this answers the question of Appell, Bana\'s and Merentes in the negative. 
From Corollary~\ref{cor:orlicz}(2) we 
see by \cite[Theorem~2.8.1]{DieHHR11} that $\V^\phi_I$ and $\V^p_I$ generate the same space 
if and only if 
\[
\tfrac1c t^p - c \le \phi(t)\le ct^p+c
\]
for some constant $c\ge 1$. 

\medskip

The structure of the paper is as follows. In the next section we present necessary 
background information. In Section~\ref{sect:RBV} we define functions of bounded 
Riesz $\phi$-variation when $\phi:\Omega\times[0,\infty)\to[0,\infty)$ is a generalized 
$\Phi$-function (Definition~\ref{def:variation}), and show that the variation defines a quasi-seminorm. In Section~\ref{sect:completeness} 
we show that the space of bounded Riesz $\phi$-variation is complete. Then we consider 
variants of the definition of $\phi$-variation in Section~\ref{sect:variants}, in particular the effect of 
replacing $\sup$ by $\limsup$. Finally, in Section~\ref{sect:connections} we prove a 
Riesz representation lemma, which connects the Riesz $\phi$-variation seminorm with the 
$L^\phi$-norm of the derivative of the function and prove the aforementioned corollaries.
We do this by connecting the Riesz variation in these 
cases to modern $\BV$-spaces as presented by Ambrosio, Fusco and Pallara in Section~3.2 of \cite{AmbFP00}.


\section{Preliminaries}

We briefly introduce our assumptions. More information about $L^\phi$-spaces can be found in \cite{HarH19}. 
Our previous works were based on conditions defined for almost every point $x\in \Omega$. In this article we consider 
not equivalence classes of functions but the functions themselves, and so the following assumptions have 
been correspondingly adjusted.

 We always use $I=[a, b]$ to denote a closed interval with 
end-points $a$ and $b$. 
\textit{Almost increasing} means that a function satisfies $f(s) \le L f(t)$ for all $s<t$ and some constant $L\ge 1$. 
If there exists a constant $c>0$ such that $\frac1c g(x)\le f(x) \le c g(x)$ for every $x$, then we write $f \approx g$.
Two functions $\phi$ and $\psi$ are \textit{equivalent}, 
$\phi\simeq\psi$, if there exists $L\ge 1$ such that 
$\psi(x,\frac tL)\le \phi(x, t)\le \psi(x, Lt)$ for every $x \in \Omega$ and every $t>0$.
Equivalent $\Phi$-functions give rise to the same space with comparable norms. 
By $c$ we indicate a generic positive constant whose value may change between appearances.
By $\beta$ we indicate a parameter from $(0, 1)$ which may appear in several assumptions; 
since the assumptions are all monotone in $\beta$, there is no loss of generality in assuming 
the same $\beta$.

\subsection{\texorpdfstring{$\Phi$}{Phi}-functions}

\begin{defn}
We say that $\phi: \Omega\times [0, \infty) \to [0, \infty]$ is a 
\textit{weak $\Phi$-function}, and write $\phi \in \Phiw(\Omega)$, if 
the following conditions hold:
\begin{itemize}
\item 
For every measurable function $f:\Omega\to \R$, the function $x \mapsto \phi(x, f(x))$ is measurable. 
\item
For every $x \in \Omega$, the function $t \mapsto \phi(x, t)$ is non-decreasing.
\item 
For every $x \in \Omega$, $\displaystyle \phi(x, 0) = \lim_{t \to 0^+} \phi(x,t) =0$ and $\displaystyle \lim_{t \to \infty}\phi(x,t)=\infty$.
\item 
For every $x\in \Omega$, the function $t \mapsto \frac{\phi(x, t)}t$ is $L$-almost increasing on $(0,\infty)$ with $L$ independent of $x$.
\end{itemize}
If $\phi\in\Phiw(\Omega)$ is additionally convex and left-continuous, then $\phi$ is a 
\textit{convex $\Phi$-function}, and we write $\phi \in \Phic(\Omega)$. If $\phi$ does not depend on $x$, then we omit the set and write $\phi \in \Phiw$ or $\phi \in \Phic$.
\end{defn}


We denote $\phi^+_{A}(t) := \sup_{x \in A\cap \Omega} \phi(x,t)$ and 
$\phi^-_{A}(t) := \inf_{x \in A\cap \Omega} \phi(x,t)$. 
We say that $\phi$ (or $\phi_A^\pm$) is non-degenerate if $\phi_\Omega^-, \phi_\Omega^+ \in \Phiw$; 
if $\phi$ is degenerate, then $\phi_\Omega^-|_{(0,\infty)}\equiv 0$ or $\phi_\Omega^+|_{(0,\infty)}\equiv \infty$.
We define several conditions. 
Let $p,q>0$. 
We say that $\phi:\Omega\times [0,\infty)\to [0,\infty)$ satisfies 
\begin{itemize}
\item[(A0)]\label{def:a0}
if there exists $\beta \in(0, 1]$ such that $ \phi(x, \beta) \le 1 \le \phi(x, \frac1{\beta})$ for all $x \in \Omega$; 

\item[(A1)]\label{def:a1}
if for every $K>0$ there exists $\beta \in (0,1]$ such that, for every ball $B$ and $x,y\in B \cap \Omega$,
\[ 
\phi(x,\beta t) \le \phi(y,t)+1 \quad\text{when}\quad \phi(y,t) \in \bigg[0, \frac{K}{|B|}\bigg];
\]

\item[(VA1)]\label{def:va1}
if for every $K>0$ there exists a modulus of continuity $\omega$ such that, for every ball $B=B_r$ and $x,y\in B \cap \Omega$,
\[ 
\phi(x, \tfrac t{1+\omega(r)})\le \phi(y,t) + \omega(r) \quad\text{when}\quad \phi(y,t) \in \bigg[0, \frac{K}{|B|}\bigg];
\]
%
%
%
%
%
\item[(aInc)$_p$] \label{def:aInc} if
$t \mapsto \frac{\phi(x,t)}{t^{p}}$ is $L_p$-almost 
increasing in $(0,\infty)$ for some $L_p\ge 1$ and all $x\in\Omega$;

\item[(aDec)$_q$] \label{def:aDec}
if
$t \mapsto \frac{\phi(x,t)}{t^{q}}$ is $L_q$-almost 
decreasing in $(0,\infty)$ for some $L_q\ge 1$ and all $x\in\Omega$.
\end{itemize} 
We say that \ainc{} holds if \ainc{p} holds for some $p>1$, and similarly for \adec{}.
If in the definition of \ainc{p} we have $L_p=1$, then we say that $\phi$ satisfies \inc{p}, 
similarly for \dec{q}.

\begin{eg}[Variable exponent growth]\label{ex:variable}
Let $p:\Omega\to[1,\infty]$ and let $\phi(x,t):=t^{p(x)}$ be the variable exponent functional with $t^\infty := \infty \chi_{(1,\infty)}(t)$. 
It was shown in \cite[Proposition~7.1.2]{HarH19} 
that $\phi$ satisfies \aone{} if and only if 
\[
 \bigg| \frac1{p(x)} - \frac1{p(y)} \bigg| \le \frac c {\log(e+\frac1{|x-y|})}.
\]
Note that this result does not require $p$ to be bounded. 
One can show that $\phi$ satisfies \vaone{} if
\[
 \bigg| \frac1{p(x)} - \frac1{p(y)} \bigg| \le \frac {\omega(|x-y|)}{\log(e+\frac1{|x-y|})}
\]
where $\omega$ is a function with $\lim_{r\to 0^+} \omega(r)=0$. 

It is easy to see that the variable exponent $\Phi$-function $\phi$ satisfies \inc{p^-} and \dec{p^+} provided 
$p^- \le p(x)\le p^+$ for every $x\in\Omega$ and that \azero{} always holds. 
\end{eg}

The stronger continuity condition \vaone{} (``vanishing \aone{}'') 
was introduced in \cite{HasO22, HasO_pp}. If $\phi$ satisfies 
\dec{}, then \vaone{} implies that $\phi$ is continuous. The condition also allows us to get sharper 
estimates. An example is the following Jensen's inequality with constant close to $1$ that is needed later on. 
Without \vaone{}, the inequality only holds for some $\beta>0$ which need not be 
small since $\phi^-$ is not convex \cite[Corollary~2.2.2]{HarH19}. 

\begin{thm}[Jensen's inequality]\label{thm:jensen}
If $\phi\in \Phic(\Omega)$ satisfies \vaone{} and $B=B_r\subset \Omega$, then 
there exists $L>0$ such that
\[
\phi_B^-\bigg(\frac1{1+\omega(r)} \fint_B |f|\, dx\bigg)
\le
\fint_B \phi(x, f)\, dx + \omega(r),
\]
for $\rho_\phi(Lf)\le 1$. 
\end{thm}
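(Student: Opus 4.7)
The plan is to reduce the inequality to the classical Jensen inequality for a single convex function $\phi(x_0,\cdot)$ with a well-chosen $x_0\in B$, and then use \vaone{} pointwise in the integrand to replace $\phi(x_0,\cdot)$ by $\phi(y,\cdot)$.

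For fixed $K>0$, \vaone{} supplies a modulus of continuity $\omega=\omega_K$ with
\[
\phi(x,\tfrac{t}{1+\omega(r)}) \le \phi(y,t)+\omega(r)\qquad \text{for all } x,y\in B=B_r \text{ with } \phi(y,t)\le K/|B|;
\]
this is the modulus appearing in the conclusion. Set $h(y):=|f(y)|/(1+\omega(r))$ and $\bar h:=\fint_B h\,dy$, so that the target reads $\phi_B^-(\bar h)\le \fint_B \phi(y,f)\,dy+\omega(r)$. From the hypothesis $\rho_\phi(Lf)\le 1$, convexity of $\phi(y,\cdot)$ together with $\phi(y,0)=0$ gives $\phi(y,|f|)\le L^{-1}\phi(y,L|f|)$ and hence $\rho_\phi(f)\le 1/L$, providing the smallness input that drives the argument.

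Now pick any $x_0\in B$. Since $\phi_B^-\le \phi(x_0,\cdot)$ and $\phi(x_0,\cdot)$ is convex, classical Jensen yields
\[
\phi_B^-(\bar h)\le \phi(x_0,\bar h)\le \fint_B \phi(x_0,h(y))\,dy.
\]
Decompose $B=E\sqcup G$ with $E:=\{y\in B : \phi(y,|f(y)|)\le K/|B|\}$. On the good set $E$, \vaone{} applies pointwise with $t=|f(y)|$ and gives $\phi(x_0,h(y))\le \phi(y,|f(y)|)+\omega(r)$, so integrating over $E$ produces a bound of $\fint_B \phi(y,f)\,dy+\omega(r)$ up to the contribution from $G$. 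Chebyshev combined with $\rho_\phi(f)\le 1/L$ gives $|G|/|B|\le 1/(KL)$, which is arbitrarily small for $L$ large.

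The main obstacle is controlling $\int_G \phi(x_0,h)\,dy$: on $G$ the range condition in \vaone{} fails, and the mere assumption $\phi\in\Phic(\Omega)$ gives no pointwise comparison of $\phi(x_0,t)$ with $\phi(y,t)$ at large $t$. The intended remedy is to truncate $h$ at the threshold $T$ defined by $\phi_B^+(T)=K/|B|$, so that \vaone{} applies uniformly on $\{h\le T\}$, and to absorb the tail $(h-T)_+$ via convexity of $\phi(x_0,\cdot)$ together with $\int_G \phi(y,f)\,dy\le 1/L$. Choosing $K$ first and then $L$ much larger (depending on $K$ and $\omega$) should make the bad-set contribution fit inside an $\omega(r)$ error; any multiplicative constant left over is absorbed by replacing $\omega$ by a suitable larger modulus of continuity, which is still compatible with \vaone{} and leaves $\omega(r)\to 0$ as $r\to 0$.
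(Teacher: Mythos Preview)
Your proposal has a genuine gap at exactly the point you flag as the ``main obstacle''. After applying Jensen for $\phi(x_0,\cdot)$ you must control $\fint_B \phi(x_0,h(y))\,dy$ on the bad set $G$, and there is simply no mechanism for this: \vaone{} gives no comparison between $\phi(x_0,t)$ and $\phi(y,t)$ once $\phi(y,t)>K/|B|$, and these can differ by an arbitrarily large factor (think $\phi(x,t)=t^{p(x)}$). Knowing that $\int_G \phi(y,f)\,dy\le 1/L$ and that $|G|$ is small therefore says nothing about $\int_G \phi(x_0,h)\,dy$. Your truncation idea does not help either: truncating $h$ at $T$ changes the average $\bar h$ on the left-hand side, and the tail $(h-T)_+$ still has to pass through $\phi(x_0,\cdot)$ somewhere, which brings back the same uncontrolled quantity. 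Finally, the multiplicative constants you hope to absorb would sit in front of $\fint_B \phi(y,f)\,dy$, not just in front of $\omega(r)$, so they cannot be hidden in a larger modulus.

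The paper avoids this problem by \emph{not} applying Jensen to $\phi(x_0,\cdot)$. Instead, with $t_0:=\fint_B|f|$ one first uses the rough Jensen inequality for $\phi_B^-$ (which is merely equivalent to a convex function; this is where $L$ enters) to obtain $\phi_B^-(t_0)\le 1/|B|$, so the range condition of \vaone{} holds at $t_0$. Then one chooses $x_0\in B$ specifically so that $\phi'(x_0,\beta_r t_0)\le (\phi')_B^-(t_0)$ and defines the auxiliary convex function $\psi(t):=\int_0^t \phi'(x_0,\min\{s,\beta_r t_0\})\,ds$, i.e.\ $\phi(x_0,\cdot)$ with its derivative capped at level $\beta_r t_0$. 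One applies Jensen to $\psi$; since $\psi(\beta_r t_0)=\phi(x_0,\beta_r t_0)\ge \phi_B^-(\beta_r t_0)$ the left-hand side is unchanged. The point is that $\psi(\beta_r t)\le \phi(y,t)+\omega(r)$ holds for \emph{every} $t\ge 0$ and every $y\in B$: for $t\le t_0$ this is \vaone{}, and for $t>t_0$ the capped slope $\psi'(\beta_r t)=\phi'(x_0,\beta_r t_0)\le (\phi')_B^-(t_0)\le \phi'(y,t_0)$ together with convexity of $\phi(y,\cdot)$ does the job. No good/bad decomposition is needed. The missing idea in your attempt is precisely this cap on the derivative (and the non-arbitrary choice of $x_0$).
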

\begin{proof}
Let $t_0:=\|f\|_{L^1(B)}/|B|$. 
Since $\phi_B^-$ is equivalent to a convex function \cite[Lemma~2.2.1]{HarH19}, we find that 
\[
\phi_B^-(t_0) = \phi_B^-\bigg(\fint_B |f|\, dx\bigg)
\le
\fint_B \phi_B^-(L |f|)\, dx
\le 
\frac 1{|B|},
\]
where the constant $L$ is determined by the equivalence. 
This is a Jensen inequality, but the constant does not approach one for small balls. 
We obtain $\omega$ from \vaone{} and define $\beta_r:=\frac1{1+\omega(r)}$. 
We denote by $\phi'$ a function, non-decreasing in $s$, such that 
\[
\phi(x,t) = \int_0^t \phi'(x,s)\, ds.
\]
Such function exists since $\phi$ is convex in the second variable. Fix $x_0\in B$ with 
\[
\phi'(x_0, \beta_r t_0)\le (\phi')_B^-(t_0)
\]
and define 
\[
\psi(t):= \int_0^t \phi'(x_0, \min\{s, \beta_r t_0\})\, ds. 
\]
Since $\psi'$ is increasing, $\psi$ is convex. Furthermore, $\psi(t)=\phi(x_0, t)$ when $t\le \beta_r t_0$. 
It follows from Jensen's inequality that 
\[
\phi_B^-\bigg(\beta_r\fint_B |f|\, dx\bigg)
\le
\psi\bigg(\beta_r\fint_B |f|\, dx\bigg) 
\le 
\fint_B \psi(\beta_r|f|)\, dx. 
\]
When $t\le t_0$ we use \vaone{} to conclude that $\psi(\beta_r t) =\phi(x_0, \beta_r t) \le \phi(x, t)+\omega(r)$. When 
$t> t_0$ we estimate
\begin{align*}
\psi(\beta_r t) 
&= 
\psi(\beta_r t_0) + \phi'(x_0, \beta_r t_0) \beta_r(t-t_0)
\le
\phi(x, t_0)+\omega(r) + (\phi')_B^-(t_0)\beta_r(t-t_0) \\
&\le
\phi(x, t_0)+\omega(r) + \phi'(x, t_0)(t-t_0)
\le
\phi(x, t) + \omega(r),
\end{align*}
where we also used the convexity of $\phi$ in the last step. Thus 
\[
\phi_B^-\bigg(\beta_r\fint_B |f|\, dx\bigg)
\le 
\fint_B \psi(\beta_r|f|)\, dx
\le 
\fint_B \phi(x, |f|)\, dx + \omega(r). \qedhere
\]
\end{proof}

\subsection{Modular spaces}

The following results are from \cite{HarHJ_pp}; the proofs follow \cite{DieHHR11, kozlowski, Mus83}.

\begin{defn}\label{def:quasiConvexsemimodular}
Let $X$ be a real vector space. A function $\rho:X \to [0,+\infty]$ is called a \textit{quasi-semimodular} on $X$ if:
\begin{enumerate}
\item the function $\lambda\mapsto\rho(\lambda x)$ is increasing on $[0,\infty)$ for every $x\in X$;
\item $\rho(0_{X})=0$;
\item $\rho (-x)=\rho(x)$ for every $x\in X$;
\item there exists $\beta\in (0,1]$ such that $\rho(\beta(\theta x +(1-\theta)y) ) \le \theta\rho(x) + (1-\theta)\rho(y)$ for every $x,y\in X$ and every $\theta \in [0,1] $.
\end{enumerate}
If (4) holds with $\beta=1$, then $\rho$ is a \textit{semimodular}.

If $\rho$ is a quasi-semimodular in $X$, then the set defined by 
\[
X_\rho
:=
\big\{ x \in X \,\big|\, \lim_{\lambda \to 0} \rho(\lambda x)=0 \big\}
\]
is called a \textit{modular space}. We define the \textit{Luxemburg quasi-seminorm} on $X_\rho$ by
\[
\|x\|_\rho 
:= 
\inf \Big\{ \lambda>0 \, \Big|\,\rho \Big( \frac{x}{\lambda}  \Big) \leqslant 1 \Big\}.
\]
\end{defn}

Note that our terminology differs from Musielak \cite{Mus83}. Our justification comes from 
the correspondence with standard terminology for norms, as demonstrated in the following proposition. 

\begin{prop}\label{prop:LuxemburgNorm}
Let $X$ be a real vector space.
\begin{enumerate}
\item If $\rho$ is a quasi-semimodular in $X$, then $\|\cdot\|_\rho$ is a quasi-seminorm.
\item If $\rho$ is a semimodular in $X$, then $\|\cdot\|_\rho$ is a seminorm.
\item If $\rho$ is a quasi-modular in $X$, then $\|\cdot\|_\rho$ is a quasi-norm.
\item If $\rho$ is a modular in $X$, then $\|\cdot\|_\rho$ is a norm.
\end{enumerate}
\end{prop}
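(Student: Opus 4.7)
My plan is to verify the quasi-seminorm axioms for $\|\cdot\|_\rho$ directly from the quasi-semimodular axioms (1)--(4), then observe that the four statements in the proposition correspond to a careful tracking of constants: the case $\beta = 1$ in (4) removes the ``quasi'', and the standard nondegeneracy clause that is added when passing from (quasi-)semimodular to (quasi-)modular removes the ``semi''.

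The easy axioms come first. Vanishing $\|0_X\|_\rho = 0$ is immediate from (2), since $\rho(\lambda \cdot 0_X) = 0 \le 1$ for every $\lambda > 0$. Absolute homogeneity $\|\alpha x\|_\rho = |\alpha|\,\|x\|_\rho$ follows, for $\alpha > 0$, from the substitution $\mu = \lambda/\alpha$ in the defining infimum, and for $\alpha < 0$ from the symmetry $\rho(-y)=\rho(y)$ of (3); neither step uses (1) or (4). Vanishing at $0$ also shows that $\alpha = 0$ is not a special case.

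The only substantive step is the quasi-triangle inequality. Given $\lambda_1 > \|x\|_\rho$ and $\lambda_2 > \|y\|_\rho$, the definition of infimum supplies $\mu_1 \in [\|x\|_\rho, \lambda_1)$ with $\rho(x/\mu_1) \le 1$, and similarly $\mu_2$ for $y$; since $1/\lambda_1 < 1/\mu_1$, monotonicity (1) upgrades this to $\rho(x/\lambda_1) \le 1$ and $\rho(y/\lambda_2) \le 1$. Setting $\theta := \lambda_1/(\lambda_1 + \lambda_2)$, the identity
\[
\frac{x+y}{\lambda_1 + \lambda_2} = \theta\cdot\frac{x}{\lambda_1} + (1 - \theta)\cdot\frac{y}{\lambda_2}
\]
together with axiom (4) gives $\rho\bigl(\beta(x+y)/(\lambda_1 + \lambda_2)\bigr) \le \theta + (1-\theta) = 1$, hence $\|x+y\|_\rho \le (\lambda_1 + \lambda_2)/\beta$. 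Letting $\lambda_1 \to \|x\|_\rho^+$ and $\lambda_2 \to \|y\|_\rho^+$ yields
\[
\|x + y\|_\rho \le \tfrac{1}{\beta}\bigl(\|x\|_\rho + \|y\|_\rho\bigr),
\]
which is the desired quasi-triangle inequality with constant $1/\beta$. When $\beta = 1$ (semimodular), this is the ordinary triangle inequality, proving~(2).

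For statements (3) and (4) I would finally verify definiteness. Assuming the standard extra clause in the definition of (quasi-)modular --- that $x \ne 0$ forces $\rho(\lambda x) > 1$ for some $\lambda > 0$ --- suppose $\|x\|_\rho = 0$. Then for every $\mu > 0$ one can pick $\lambda < 1/\mu$ with $\rho(x/\lambda) \le 1$, and monotonicity (1) gives $\rho(\mu x) \le \rho(x/\lambda) \le 1$. Hence $\rho(\lambda x) \le 1$ for every $\lambda > 0$, and the nondegeneracy forces $x = 0$; combined with (1) and (2) above this gives (3) and (4). The main (and only) subtle point in the argument is the monotone propagation step, moving from ``$\rho(x/\mu) \le 1$ for some $\mu$ close to $\|x\|_\rho$'' to ``$\rho(x/\lambda_1) \le 1$ for the particular $\lambda_1$ we chose''; we cannot appeal to left-continuity of $\rho(x/\cdot)$, but axiom (1) is exactly what bridges this gap. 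Everything else is routine.
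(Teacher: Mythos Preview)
The paper does not actually prove this proposition; it is imported from \cite{HarHJ_pp}, with the comment that ``the proofs follow \cite{DieHHR11, kozlowski, Mus83}''. Your argument is precisely the standard one found in those references and is correct.

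One small remark on parts (3)--(4): the nondegeneracy clause you invoke (``$x\ne 0$ forces $\rho(\lambda x)>1$ for some $\lambda$'') looks stronger than the usual modular axiom $\rho(x)=0\Rightarrow x=0$, but the two are equivalent under (1)--(4). Indeed, from $\rho(\mu x)\le 1$ for all $\mu>0$, property (4) with $y=0$ gives $\rho(\beta\theta z)\le\theta\rho(z)$; taking $z=x/\theta$ yields $\rho(\beta x)\le\theta\rho(x/\theta)\le\theta$ for every $\theta\in(0,1]$, so $\rho(\beta x)=0$, and rescaling gives $\rho(\lambda x)=0$ for every $\lambda>0$. Hence your assumed clause is not an additional hypothesis.
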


The next proposition contains the main properties that we need regarding modular spaces. 

\begin{lem}[\cite{HarHJ_pp}]\label{lem:modularInequalities}
Let $X$ be a real vector space, $\rho$ be a quasi-semimodular on $X$ and $x \in X$.
Denote by $\beta$ the constant in property (4) of Definition \ref{def:quasiConvexsemimodular}.
Then
\begin{enumerate}
\item $\|x\|_\rho<1\implies \rho(x)\leq 1\implies \|x\|_\rho\leqslant 1$;
\item $\|x\|_\rho < 1 \implies \beta\rho(x)\leqslant \|x\|_\rho$;
\item $\|x\|_\rho > 1 \implies\rho(x)\geqslant\beta\|x\|_\rho$;
\item $\|x\|_\rho \leqslant \beta^{-1}\rho(x)+1$.
\end{enumerate}
\end{lem}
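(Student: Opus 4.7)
The proof rests on only two tools: the monotonicity of $\lambda\mapsto\rho(\lambda x)$ and the specialisation of quasi-convexity obtained by setting $b=0$ (using $\rho(0)=0$), namely
\[
\rho(\beta\theta a)\le\theta\rho(a)\qquad\text{for every } \theta\in[0,1],\ a\in X.
\]
Each item is obtained by picking $\mu$ slightly above or below $\|x\|_\rho$ and translating between $\rho(x/\mu)$ and $\rho(x)$ through these two facts.

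Part (1) is immediate: given $\|x\|_\rho<1$, any $\mu\in(\|x\|_\rho,1)$ with $\rho(x/\mu)\le 1$ satisfies $1/\mu>1$, so monotonicity yields $\rho(x)\le\rho(x/\mu)\le 1$; conversely, $\rho(x)\le 1$ puts $\lambda=1$ in the defining set of the infimum. For Part (2) I would fix such a $\mu$ and distinguish two sub-cases. If $\mu\le\beta$, apply the specialised quasi-convexity with $a=x/\mu$ and $\theta=\mu/\beta\in(0,1]$ to get $\rho(x)=\rho(\beta\theta a)\le(\mu/\beta)\rho(x/\mu)\le\mu/\beta$, i.e.\ $\beta\rho(x)\le\mu$. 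If instead $\mu\in(\beta,1)$, part (1) already forces $\rho(x)\le 1$, so $\beta\rho(x)\le\beta<\mu$. Letting $\mu\searrow\|x\|_\rho$ gives the claim.

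Parts (3) and (4) follow the same template. For (3), the contrapositive of (1) gives $\rho(x)>1$ under $\|x\|_\rho>1$, which already finishes the easy regime $\|x\|_\rho\le 1/\beta$ since there $\beta\|x\|_\rho\le 1<\rho(x)$; when $\|x\|_\rho>1/\beta$, any $\mu\in(1/\beta,\|x\|_\rho)$ satisfies $\rho(x/\mu)>1$, and the specialised quasi-convexity with $a=x$, $\theta=1/(\beta\mu)\in(0,1)$ yields $1<\rho(x/\mu)\le\rho(x)/(\beta\mu)$, so $\rho(x)>\beta\mu\to\beta\|x\|_\rho$. For (4), which is trivial when $\rho(x)=\infty$, I would test the candidate $\mu:=\beta^{-1}\rho(x)+1\ge 1$: if $\rho(x)\le 1$ then $\rho(x/\mu)\le\rho(x)\le 1$ by monotonicity; if $\rho(x)>1$ then $\mu>1/\beta$, and the same trick with $\theta=1/(\beta\mu)$ gives $\rho(x/\mu)\le\rho(x)/(\rho(x)+\beta)<1$. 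I expect the only non-routine point to be the small case split appearing in (2) and (3): the quasi-convexity estimate is only directly applicable when $\mu$ lies on the correct side of $\beta$ (respectively $1/\beta$), and the opposite regime has to be absorbed into the trivial bound $\rho(x)\le 1$ (respectively $\rho(x)>1$) supplied by (1). This dichotomy collapses in the genuinely convex case $\beta=1$, which explains why the classical statements do not need it.
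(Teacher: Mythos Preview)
The paper does not give its own proof of this lemma; it merely cites \cite{HarHJ_pp}, so there is nothing to compare your argument against in this source. Your proof is correct: the specialisation $\rho(\beta\theta a)\le\theta\rho(a)$ obtained from property~(4) with $y=0$ is exactly the right tool, and your case splits in (2) and (3) correctly handle the regimes where this estimate applies directly versus where one falls back on the cruder bound from~(1). This is the standard argument one would expect in the cited reference.
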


As special cases we have generalized Orlicz and Orlicz--Sobolev spaces.

\begin{defn}\label{def:Lphi}
Let $\phi \in \Phiw(\Omega)$ and define the quasi-semimodular 
$\varrho_\phi$ for $f\in L^0(\Omega)$, the set of measurable functions in $\Omega$, by 
\begin{align*}
\varrho_\phi(f) &:= \int_\Omega \phi(x, |f(x)|)\,dx.
\end{align*}
The \emph{generalized Orlicz space}, also called Musielak--Orlicz space, is defined as the set 
\begin{align*}
L^\phi(\Omega) := (L^0(\Omega))_{\rho_\phi} = 
\big\{f \in L^0(\Omega) \,\big|\, \lim_{\lambda \to 0^+} \varrho_\phi(\lambda f) = 0\big\}
\end{align*}
equipped with the (Luxemburg) quasi-seminorm 
\begin{align*}
\|f\|_{L^\phi(\Omega)}:= \|f\|_{\rho_\phi} 
= \inf  \bigg\{ \lambda>0 \, \bigg|\, \varrho_\phi \Big(\frac{f}{\lambda}  \Big) \leq 1 \bigg\}.
\end{align*}
If the set is clear from context we abbreviate $\|f\|_{L^\phi(\Omega)}$ by $\|f\|_{\phi}$.
\end{defn}


\begin{defn}
A function $f \in L^\phi(\Omega)$ belongs to the
\textit{Orlicz--Sobolev space $W^{1, \phi}(\Omega)$} if its weak derivative $f'$ exists and belongs to $L^{\phi}(\Omega)$.
For $f\in W^{1,\phi}(\Omega)$, we define the norm
\[
\| f \|_{W^{1,\phi}(\Omega)} := \| f \|_{L^\phi(\Omega)} + \| f' \|_{L^\phi(\Omega)}.
\]
\end{defn}

\section{Bounded Variation in the Riesz Sense}\label{sect:RBV}

We introduce the space of bounded Riesz $\phi$-variation based on $\V^\phi_I$ from the introduction. Note that we do not 
assume that $f$ is measurable.

\begin{defn} \label{def:RBV}
Let $I\subset \R$ be a closed interval and $\phi\in \Phiw(I)$.
The space of \emph{bounded $\phi$-variation in Riesz' sense} is defined by the 
quasi-semimodular $\V^\phi_{I}$: 
\[
\RBV^\phi(I):=\big\{ f : I\to \R \, \big|\, \lim_{\lambda\to 0}\V^\phi_I(\lambda f)=0\big\}.
\]
\end{defn}

Often two $\Phi$-functions $\phi, \psi\in\Phiw(I)$ are considered to be the same, if $\phi(x,t)=\psi(x,t)$ for almost everywhere $x$ and every $t$.
In our setting we cannot use this convention, as the following example demonstrates.

\begin{eg}
Let $(A_j)_{j=1}^\infty$ be a sequence of pairwise disjoint, countable and dense subsets of $I$.
Denote $A:=\bigcup_{j=1}^\infty A_j$.
Then $|A|=0$ since $A$ is countable.
Define $\phi, \psi:I\times[0,\infty)\to[0,\infty]$ by $\phi(x,t):=t^2$ and 
\[
\psi(x,t):=\begin{cases}
t^2, & \text{if }x\in I\setminus A, \\
jt^2, & \text{if }x\in A_j\text{ for some }j\in\N.
\end{cases}
\]
Then $\psi(x,t)=\phi(x,t)$ for every $x\in I\setminus A$ and every $t\geq 0$, so that $\phi=\psi$ a.e.
Let $f:I\to\R$ be the identity function and let $(I_k)$ be any partition of $I$.
Then $|\Delta_kf|/|I_k|=1$ for every $k$.
Since the sets $A_j$ are dense, the intersection $A_j\cap I_k$ is non-empty for every $j$ and every $k$.
Thus $\psi_{I_k}^+(1)\geq j$ for every $j$, and therefore
\[
\V_I^\psi(f)
\geq
\sum_{k=1}^n \psi^{+}_{I_k}  \bigg(\frac{|\Delta_kf|}{|I_k|}  \bigg)|I_k|
\ge
\sum_{k=1}^n j|I_k|
=
j|I|.
\]
Letting $j\to\infty$, we see that $\V^\psi_{I}(f)=\infty$.
On the other hand, 
\[
\sum_{k=1}^n \phi^{+}_{I_k}  \bigg(\frac{|\Delta_kf|}{|I_k|}  \bigg)|I_k|
= \sum_{k=1}^n \phi^{+}_{I_k} (1)|I_k|
= \sum_{k=1}^n |I_k|
= |I|.
\]
This implies that $\V^\phi_{I}(f)=|I|$.
\end{eg}

Next, we show that equivalent $\Phi$-functions give rise to the same space of bounded variation.

\begin{lem}\label{lem:equivalence}
Suppose that $\phi,\psi\in\Phiw(I)$ and $\phi\simeq\psi$ with constant $L\ge 1$.
Then 
\[
\V^\psi_{I}(L^{-1}f) \leq \V^\phi_{I}(f) \leq \V^\psi_{I}(Lf)
\]
for every $f:I\to\R$ and $\RBV^\phi(I)=\RBV^\psi(I)$.
\end{lem}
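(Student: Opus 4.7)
The plan is to establish both inequalities by pushing the pointwise equivalence through the supremum defining $\phi^+_{I_k}$ and then summing over a partition. By the definition of equivalence, $\psi(x, t/L) \le \phi(x, t) \le \psi(x, Lt)$ for every $x \in I$ and $t>0$. Taking supremum over $x \in I_k$ preserves these inequalities, yielding
\[
\psi^+_{I_k}(t/L) \le \phi^+_{I_k}(t) \le \psi^+_{I_k}(Lt)
\]
for any subinterval $I_k\subset I$ and any $t>0$.

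Next I would apply these pointwise bounds with $t = |\Delta_k f|/|I_k|$. For the upper bound, the right-hand inequality gives
\[
\phi^+_{I_k}\bigg(\frac{|\Delta_k f|}{|I_k|}\bigg)|I_k| \le \psi^+_{I_k}\bigg(\frac{|\Delta_k(Lf)|}{|I_k|}\bigg)|I_k|,
\]
since $L|\Delta_k f| = |\Delta_k(Lf)|$. Summing over $k$ and taking supremum over partitions of $I$ yields $\V^\phi_I(f) \le \V^\psi_I(Lf)$. The lower bound is obtained analogously from the left-hand inequality applied with $t = |\Delta_k f|/|I_k|$, rewriting $t/L = |\Delta_k(L^{-1}f)|/|I_k|$.

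For the equality of the function spaces, I would first note that the equivalence relation is symmetric: $\phi\simeq\psi$ with constant $L$ directly implies $\psi\simeq\phi$ with the same constant by substituting $t \mapsto t/L$ and $t\mapsto Lt$ in the two inequalities. Then, for $f \in \RBV^\phi(I)$, applying the already-established first inequality with $f$ replaced by $L\lambda f$ gives
\[
\V^\psi_I(\lambda f) = \V^\psi_I\bigl(L^{-1}(L\lambda f)\bigr) \le \V^\phi_I(L\lambda f),
\]
and the right-hand side tends to $0$ as $\lambda \to 0$ since $L\lambda \to 0$. Hence $f \in \RBV^\psi(I)$, and the reverse containment follows by symmetry. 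No step looks like a real obstacle; the whole proof is bookkeeping on the definitions, with the only mild care being that the supremum in the definition of $\phi^+_{I_k}$ preserves the pointwise inequalities.
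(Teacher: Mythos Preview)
Your proof is correct and follows essentially the same route as the paper: push the pointwise equivalence $\psi(x,t/L)\le\phi(x,t)\le\psi(x,Lt)$ through the supremum defining $\phi^+_{I_k}$, sum over a partition, and take the supremum over partitions; then deduce $\RBV^\phi(I)=\RBV^\psi(I)$ by a substitution in the limit $\lambda\to 0$. The only cosmetic difference is that the paper writes the space equality as ``$\V^\phi_I(\lambda f)\to 0 \Rightarrow \V^\psi_I(L^{-1}\lambda f)\to 0$'' directly from the first inequality, whereas you phrase the same substitution as applying the inequality to $L\lambda f$.
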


\begin{proof}
By the definition of equivalence, 
\[
\psi(x,L^{-1}t)\le \phi(x, t)\le \psi(x, Lt)
\]
for every $x \in I$ and every $t\in[0,\infty)$.
Thus
\[
\V^\phi_{I}(f)
=\sup_{(I_k)} \sum_{k=1}^n \phi^+_{I_k} \bigg(\frac{|\Delta_kf|}{|I_k|}  \bigg) |I_k|
\leq \sup_{(I_k)} \sum_{k=1}^n \psi^+_{I_k} \bigg(L\frac{|\Delta_kf|}{|I_k|}  \bigg) |I_k|
=
\V^\psi_{I}(Lf).
\]
Similarly $\V^\psi_{I}(L^{-1}f) \leq \V^\phi_{I}(f)$.
To see that $\RBV^\phi(I)=\RBV^\psi(I)$, we note that if $\V^\phi_{I}(\lambda f)\to 0$, then $ \V^\psi_{I}(L^{-1}\lambda f)\to 0$, and if $\V^\psi_{I}(\lambda f)\to 0$, then $ \V^\phi_{I}(L^{-1}\lambda f)\to 0$.
\end{proof}



We now show that $\V_I^\phi$ is a quasi-semimodular.

\begin{lem} \label{lem:modular}
Let $\phi\in\Phiw(I)$. Then $\V^\phi_{I} : \mathbb R^{I} \to \overline{\mathbb R}_+ $ is a quasi-semimodular.
If $\phi\in\Phic(I)$, then $\V_I^\phi$ is a semimodular.
\end{lem}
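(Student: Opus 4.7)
The plan is to verify the four axioms of Definition~\ref{def:quasiConvexsemimodular} for $\V^\phi_I$ in turn, the first three essentially for free and the fourth via a two-step reduction: first for $\phi \in \Phic(I)$, then for general $\phi \in \Phiw(I)$ by comparison with an equivalent convex $\Phi$-function.

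Axioms (1)--(3) I would dispatch in a single short paragraph. Each $\phi^+_{I_k}$ inherits from $\phi$ that it is non-decreasing on $[0,\infty)$ with $\phi^+_{I_k}(0)=0$. Axiom (1) then follows since $|\Delta_k(\lambda f)| = \lambda |\Delta_k f|$ for $\lambda \ge 0$, making each partition sum monotone in $\lambda$ and hence the same for its supremum; (2) is immediate because $\Delta_k 0 = 0$; (3) holds because $|\Delta_k(-f)| = |\Delta_k f|$ and so the partition sums themselves are invariant under $f \mapsto -f$.

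For axiom (4) in the convex case $\phi \in \Phic(I)$, the key point is that each $\phi^+_{I_k}$ is a pointwise supremum of convex functions and is hence itself convex with $\phi^+_{I_k}(0) = 0$. Fix a partition $(I_k)$ and $\theta \in [0,1]$. Using $|\Delta_k(\theta f + (1-\theta)g)| \le \theta|\Delta_k f| + (1-\theta)|\Delta_k g|$, the monotonicity of $\phi^+_{I_k}$, and then its convexity, I would bound
\[
\phi^+_{I_k}\!\bigg(\frac{|\Delta_k(\theta f + (1-\theta)g)|}{|I_k|}\bigg) \le \theta\,\phi^+_{I_k}\!\bigg(\frac{|\Delta_k f|}{|I_k|}\bigg) + (1-\theta)\,\phi^+_{I_k}\!\bigg(\frac{|\Delta_k g|}{|I_k|}\bigg).
\]
Multiplying by $|I_k|$, summing over $k$, and taking the supremum over partitions then yields $\V^\phi_I(\theta f + (1-\theta)g) \le \theta\V^\phi_I(f) + (1-\theta)\V^\phi_I(g)$, i.e.\ the semimodular inequality with $\beta = 1$.

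For axiom (4) in the general case $\phi \in \Phiw(I)$, I would invoke the existence of an equivalent convex $\Phi$-function (Lemma~2.2.1 of \cite{HarH19}) to obtain $\tilde\phi \in \Phic(I)$ and $L \ge 1$ with $\phi \simeq \tilde\phi$. The convex case already establishes the semimodular inequality for $\V^{\tilde\phi}_I$, and Lemma~\ref{lem:equivalence} provides the two-sided comparison $\V^\phi_I(h) \le \V^{\tilde\phi}_I(Lh)$ and $\V^{\tilde\phi}_I(h) \le \V^\phi_I(Lh)$. Setting $\beta := 1/L^2 \in (0,1]$ and chaining these,
\[
\V^\phi_I\bigl(\beta(\theta f + (1-\theta)g)\bigr) \le \V^{\tilde\phi}_I\bigl(L\beta(\theta f + (1-\theta)g)\bigr) \le \theta\,\V^{\tilde\phi}_I(L\beta f) + (1-\theta)\,\V^{\tilde\phi}_I(L\beta g) \le \theta\,\V^\phi_I(f) + (1-\theta)\,\V^\phi_I(g),
\]
where the final step uses $\V^{\tilde\phi}_I(L\beta h) \le \V^\phi_I(L^2\beta h) = \V^\phi_I(h)$, absorbing the scalar $L^2\beta = 1$. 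The only real item demanding care is the bookkeeping of the constant $L$ so that the chosen $\beta$ lies in $(0,1]$; there is no genuine obstacle beyond that.
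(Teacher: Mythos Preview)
Your proof is correct. Axioms (1)--(3) and the convex case of axiom (4) match the paper's argument essentially verbatim. For the general $\Phiw$ case of axiom (4), however, you take a different route: you pass to an equivalent convex $\tilde\phi$ via \cite[Lemma~2.2.1]{HarH19} and transfer the semimodular inequality through the comparison in Lemma~\ref{lem:equivalence}, arriving at $\beta=1/L^2$. The paper instead argues directly from the fact that $\phi^+_{I_k}$ inherits \ainc{1} from $\phi$: with $\beta=1/(2L)$ one estimates $\phi^+_{I_k}\big(\tfrac1{2L}(\theta a+(1-\theta)b)\big)\le \phi^+_{I_k}(\tfrac\theta L a)+\phi^+_{I_k}(\tfrac{1-\theta}L b)$ by monotonicity, then pulls out $\theta$ and $1-\theta$ via \ainc{1}. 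The paper's approach is more elementary and self-contained (no external structural lemma needed), while yours has the virtue of reducing cleanly to the convex case already established; both yield a valid $\beta\in(0,1]$.
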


\begin{proof}
The properties $\V^\phi_{I}(0)=0$ and $\V^\phi_{I}(-f)= \V^\phi_{I}(f)$ are clear.
Since $t\mapsto\phi(x,t)$ is increasing for every $x\in I$ it follows that $t\mapsto\phi_A^+(t)$ is increasing whenever $A\subset I$.
Since\[
\V^\phi_{I}(\lambda f)
=\sup_{(I_k)} \sum_{k=1}^n \phi^+_{I_k} \bigg(\lambda\frac{|\Delta_kf|}{|I_k|}  \bigg) |I_k|,
\]
the function $\lambda\mapsto\V^{\phi}_{I}(\lambda f)$ is increasing on $[0,\infty)$ for every $f:I\to\R$.

Note that $\phi_A^+$ satisfies \ainc{1} with the same constant $L\geq 1$ as $\phi$.
Let $(I_k)$ be a partition of $I$.
If $\theta\in[0,1]$, then
\[
\begin{aligned}
\phi_{I_k}^+ \bigg(\frac{1}{2L}\frac{|\Delta_k (\theta f + (1-\theta)g)|}{|I_k|}  \bigg) 
& \leq
\phi_{I_k}^+ \bigg(\frac{\theta}{2L}\frac{|\Delta_kf|}{|I_k|}+\frac{1-\theta}{2L}\frac{|\Delta_k g|}{|I_k|}  \bigg) \\
& \leq 
\phi_{I_k}^+ \bigg(\frac\theta L\frac{|\Delta_k f|}{|I_k|}  \bigg)
+\phi_{I_k}^+ \bigg(\frac{1-\theta}L\frac{|\Delta_k g|}{|I_k|}  \bigg) \\
& \leq
\theta\phi_{I_k}^+ \bigg(\frac{|\Delta_k f|}{|I_k|}  \bigg)
+(1-\theta)\phi_{I_k}^+ \bigg(\frac{|\Delta_k g|}{|I_k|}  \bigg),
\end{aligned}
\]
where the last inequality follows from \ainc{1}.
This implies that $\V^\phi_{I}(\beta(\theta f +(1-\theta)g)) \leq \theta\V^\phi_{I}(f) + (1-\theta)\V^\phi_{I}(g)$ with $\beta:=\frac 1{2L}$.

If $\phi\in\Phic(I)$, then $\phi_{I_k}^+$ is convex as the supremum of convex functions.
Thus
\[
\begin{aligned}
\phi_{I_k}^+ \bigg(\frac{|\Delta_k (\theta f + (1-\theta)g)|}{|I_k|}  \bigg) 
& \leq
\phi_{I_k}^+ \bigg(\theta\frac{|\Delta_kf|}{|I_k|}+(1-\theta)\frac{|\Delta_k g|}{|I_k|}  \bigg) \\
& \leq 
\theta\phi_{I_k}^+ \bigg(\frac{|\Delta_k f|}{|I_k|}  \bigg)+(1-\theta)\phi_{I_k}^+ \bigg(\frac{|\Delta_k g|}{|I_k|}  \bigg),
\end{aligned}
\]
and it follows that $\V^\phi_{I}$ is convex.
\end{proof}

By Proposition~\ref{prop:LuxemburgNorm}, we can define the Luxemburg quasi-seminorm in $\RBV^\phi(I)$ by
\[
\|f\|_{\RBV^\phi(I)}= \inf \bigg\{\lambda >0 \, \Big|\, \V^\phi_{I}  \Big( \frac{f}{\lambda}  \Big) 
\le 1 \bigg\}.
\]
This is not a quasi-norm, as is easily seen by considering constant functions: if $f$ is constant, then $\V^\phi_{I}(f/\lambda)=0$ for every $\lambda>0$ (since $\Delta_kf=0$ for every partition and every $k$), and therefore $\|f\|_{\RBV^\phi(I)}=0$.
It is also easy to see that $\V^\phi_{I}(f)=\V^\phi_{I}(g)$ whenever $f-g$ is constant.
Thus, in addition to $\RBV^\phi(I)$, we also consider the sub-space 
\[
\RBVz^\phi([a, b]) := \{f\in\RBV^\phi([a, b])\mid f(a)=0\}.
\]
Then $\RBVz^\phi(I)$ is a quasi-normed space with the quasi-norm $\|\cdot\|_{\RBV^\phi(I)}$, as we will see in the next theorem.

\begin{thm}
Let $\phi\in\Phiw(I)$.
Then $\RBVz^\phi(I)$ is a quasi-normed space which is non-trivial if and only if $\phi^+_{I}$ is non-degenerate.
\end{thm}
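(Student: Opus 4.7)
The plan is to note that, by Lemma~\ref{lem:modular}, $\V_I^\phi$ is a quasi-semimodular, so Proposition~\ref{prop:LuxemburgNorm}(1) already furnishes $\|\cdot\|_{\RBV^\phi(I)}$ as a quasi-seminorm on $\RBV^\phi(I)$; only the separation axiom remains in order to upgrade it to a quasi-norm on $\RBVz^\phi(I)$. For this, suppose $f\in\RBVz^\phi(I)$ with $\|f\|_{\RBV^\phi(I)}=0$; by the monotonicity of $\lambda\mapsto\V_I^\phi(\lambda f)$, one has $\V_I^\phi(\lambda f)\le 1$ for all $\lambda>0$. If $f(x_0)\neq 0$ for some $x_0\in(a,b]$, I would test against the partition $\{[a,x_0],[x_0,b]\}$ (or $\{[a,b]\}$ when $x_0=b$) to obtain
\[
\V_I^\phi(\lambda f)\ge \phi^+_{[a,x_0]}\!\bigg(\frac{\lambda|f(x_0)|}{x_0-a}\bigg)(x_0-a),
\]
whose right-hand side blows up as $\lambda\to\infty$ because $\phi(x,\cdot)\to\infty$ pointwise in $x$, contradicting the uniform bound. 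Hence $f\equiv 0$.

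A preparatory observation for the dichotomy is that, by the $L$-almost increasingness of $t\mapsto\phi(x,t)/t$ (uniform in $x$, built into $\Phiw$), whenever $\phi^+_A(t_0)<\infty$ for a single $t_0>0$ one has $\phi^+_A(t)\le Lt\,\phi^+_A(t_0)/t_0\to 0$ on $(0,t_0]$, so $\phi^+_A\in\Phiw$. Consequently, ``$\phi^+_I$ degenerate'' is equivalent to $\phi^+_I\equiv\infty$ on $(0,\infty)$.

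For the direction $(\Leftarrow)$ of the non-triviality equivalence, I would take $f(x):=x-a$: it satisfies $f(a)=0$, is non-zero, and every partition has $|\Delta_k f|/|I_k|=1$, so
\[
\V_I^\phi(\lambda f)\le \phi^+_I(\lambda)|I|\to 0 \quad\text{as } \lambda\to 0^+,
\]
placing $f\in\RBVz^\phi(I)\setminus\{0\}$. For $(\Rightarrow)$, assume $\phi^+_I\equiv\infty$ on $(0,\infty)$ and let $f\in\RBVz^\phi(I)$. Testing the trivial partition $\{[a,b]\}$ and using $f(a)=0$ forces $f(b)=0$, else $\V_I^\phi(\lambda f)=\infty$ for every $\lambda>0$. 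For any $c\in(a,b)$, the identity $\phi^+_I(t)=\max(\phi^+_{[a,c]}(t),\phi^+_{[c,b]}(t))$ combined with the preparatory observation forbids both restrictions from lying in $\Phiw$; hence at least one of $\phi^+_{[a,c]}$, $\phi^+_{[c,b]}$ is identically $\infty$ on $(0,\infty)$. Testing the partition $\{[a,c],[c,b]\}$ and using $f(b)=0$ then forces $f(c)=0$ in either case, yielding $f\equiv 0$.

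The main technical point is the dichotomy for $\phi^+_A$ via \ainc{1}; once that is established, the partition-testing arguments in both directions are routine.
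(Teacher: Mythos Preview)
Your proposal is correct and follows essentially the same approach as the paper: the separation axiom is handled by testing a two-interval partition against a point where $f$ is nonzero, the non-degenerate direction uses a linear function, and the degenerate direction uses that at least one of $\phi^+_{[a,c]}$, $\phi^+_{[c,b]}$ must be identically infinite. Your preparatory observation (that $\phi^+_A$ degenerate is equivalent to $\phi^+_A\equiv\infty$ on $(0,\infty)$, via \ainc{1}) makes explicit a dichotomy the paper states without proof, and your non-degenerate step is marginally cleaner since showing $\V_I^\phi(\lambda f)\le \phi^+_I(\lambda)|I|\to 0$ avoids the separate appeal to \ainc{1} that the paper makes after bounding $\V_I^\phi(g)$.
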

\begin{proof}
By Proposition~\ref{prop:LuxemburgNorm} and Lemma~\ref{lem:modular}, $\RBVz^\phi(I)$ is a quasi-seminormed space.
For $\RBVz^\phi(I)$ to be a quasi-normed space, we check that $\|f\|_{\RBVz^\phi(I)}=0$ only if $f=0$.
This is equivalent to the condition that $\|f\|_{\RBV^\phi(I)}$ is non-zero for non-constant $f$
since $f(a)=0$.

Let $f\in\RBV^\phi(I)$ be an arbitrary non-constant function.
Then we can choose a partition $(I_k)$ of $I$ such that $\Delta_{k_0}f\neq 0$ for some $k_0$.
Note that $\lim_{t\to\infty}\phi_{I_{k_0}}^+(t)=\infty$ by the definition of $\Phiw(I)$.
Hence, by the definition of $\V_{I}^\phi$, we get that
\begin{equation*}
\lim_{\lambda\to 0^+}\V_{I}^\phi \Big(\frac{f}{\lambda} \Big)
\geq
\lim_{\lambda\to 0^+} \sum_{k=1}^n \phi_{I_{k}}^+ \bigg(\frac{|\Delta_{k}f|}{\lambda|I_k|}  \bigg)|I_k|
\geq
\lim_{\lambda\to 0^+} \phi_{I_{k_0}}^+ \bigg(\frac{|\Delta_{k_0}f|}{\lambda|I_{k_0}|}  \bigg)|I_{k_0}|
=\infty.
\end{equation*}
Thus $\V_{I}^\phi(f/\lambda)>1$ when $\lambda$ is small enough, and therefore $\|f\|_{\RBV^\phi(I)}>0$.

We now prove the claim concerning the non-triviality of $\RBVz^\phi(I)$.
Suppose first that $\phi_{I}^+$ is degenerate.
Let $f$ be a non-constant function.
Let us show that $\V_I^\phi(f)=\infty$.
If $f(a)\neq f(b)$, then $|\Delta f(I)|/|I|>0$, and
\[
\V_{I}^\phi(f)
\geq
\phi_{I}^+ \bigg(\frac{|\Delta f(I)|}{|I|} \bigg)|I|
=\infty.
\]
If $f(a)=f(b)$, then there exists $c\in I$ with $f(c)\neq f(a)$, since $f$ is not constant.
Let $I_1:=[a,c]$ and $I_2:=[c,b]$.
Since $\phi_{I}^+$ is degenerate, it follows that at least one of $\phi_{I_1}^+$ or $\phi_{I_2}^+$ must also be degenerate.
Since $|\Delta_k f|/|I_k|>0$, $k=1,2$, we get that
\[
\V_{I}^\phi(f)
\geq
\sum_{k=1}^2\phi_{I_k}^+ \bigg(\frac{|\Delta_k f|}{|I_k|}  \bigg)|I_k|
=\infty.
\]
Thus $\V_I^\phi(\lambda f)=\infty$ for every non-constant function $f$ and every
$\lambda\in(0,\infty)$.
Hence $\RBV^\phi(I)$ is just the space of constant functions, which implies that $\RBVz^\phi(I)$ is trivial. 

Suppose then that $\phi^+_I$ is non-degenerate.
Thus there exists $t_0\in(0,\infty)$ with $\phi^+_{I}(t_0)<\infty$.
Let $g(x):=t_0(x-a)$. If $(I_k)$ is any partition of $I$, then
\[
\sum_{k=1}^n \phi_{I_{k}}^+ \bigg(\frac{|\Delta_{k}g|}{|I_k|}  \bigg)|I_k|
=
\sum_{k=1}^n \phi_{I_{k}}^+(t_0)|I_k|
\leq
\sum_{k=1}^n \phi_{I}^+(t_0)|I_k|
=\phi^+_I(t_0)|I|
<
\infty,
\]
which, together with \ainc{1}, implies that $g\in\RBV^\phi(I)$.
Since $g(a)=0$, it follows that $g\in\RBVz^\phi(I)$.
Thus $\RBVz^\phi(I)$ is non-trivial.
\end{proof}


\section{Completeness}\label{sect:completeness}

In this section, we show the completeness of the space of bounded Riesz $\varphi$-variation.

\begin{lem}\label{lem:modularnormab}
Let $\varphi \in \Phiw(I)$ and $L$ be the constant from \ainc{1}. Then, for $\alpha\ge 0$ and $\beta>0$,
\[
 \V^\phi_{I}  \Big(\frac{f}{\beta}  \Big) 
 \le \alpha \implies \|f\|_{\RBV^\phi(I)}
 \le
 \begin{cases}
 L \alpha \beta, & \alpha >1,\\
 \beta, & \alpha \leqslant 1.
 \end{cases}
\]
\end{lem}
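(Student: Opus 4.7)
The plan is to unwind the definition of the Luxemburg quasi-seminorm,
\[
\|f\|_{\RBV^\phi(I)} = \inf\Big\{\lambda > 0 \,\Big|\, \V^\phi_I\Big(\tfrac f\lambda\Big) \le 1\Big\},
\]
and simply exhibit, in each of the two cases, an explicit $\lambda$ at which the modular lies at or below $1$.

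The case $\alpha \le 1$ is immediate from the hypothesis: $\V^\phi_I(f/\beta) \le \alpha \le 1$, so $\lambda = \beta$ qualifies and $\|f\|_{\RBV^\phi(I)} \le \beta$. For $\alpha > 1$, I would show that the choice $\lambda := L\alpha\beta$ qualifies, giving $\|f\|_{\RBV^\phi(I)} \le L\alpha\beta$. Fix an arbitrary partition $(I_k)$. A preliminary observation is that each $\phi^+_{I_k}$ inherits \ainc{1} with the same constant $L$, since almost-monotonicity of $t \mapsto \phi(x,t)/t$ is preserved under pointwise suprema over $x \in I_k$. Writing $t_k := |\Delta_k f|/(\beta|I_k|)$ and $s_k := t_k/(L\alpha)$, the bound $s_k \le t_k$ holds since $L\alpha \ge 1$, and applying \ainc{1} to $\phi^+_{I_k}$ yields
\[
\phi^+_{I_k}(s_k) \le L\,\frac{s_k}{t_k}\,\phi^+_{I_k}(t_k) = \frac1\alpha\,\phi^+_{I_k}(t_k).
\]

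Multiplying by $|I_k|$, summing over $k$, and invoking the hypothesis produces
\[
\sum_k \phi^+_{I_k}\Big(\tfrac{|\Delta_k f|}{L\alpha\beta|I_k|}\Big)|I_k| \;\le\; \frac1\alpha\,\V^\phi_I(f/\beta) \;\le\; 1,
\]
and taking the supremum over partitions gives $\V^\phi_I(f/(L\alpha\beta)) \le 1$, as required. The only step requiring a moment's thought is the inheritance of \ainc{1} by $\phi^+_{I_k}$; once this is granted the argument is pure bookkeeping, translating a modular bound into a Luxemburg quasi-norm bound, and no genuine obstacle is anticipated.
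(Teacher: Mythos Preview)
Your proposal is correct and follows essentially the same approach as the paper: both handle $\alpha\le 1$ directly from the definition, and for $\alpha>1$ use that $\phi^+_{I_k}$ inherits \ainc{1} to obtain $\phi^+_{I_k}(t/(L\alpha))\le \tfrac1\alpha\phi^+_{I_k}(t)$ termwise, then sum and take the supremum over partitions. The only minor wrinkle is that your displayed inequality $\phi^+_{I_k}(s_k)\le L\,\tfrac{s_k}{t_k}\,\phi^+_{I_k}(t_k)$ tacitly assumes $t_k>0$; the case $t_k=0$ is trivial since $\phi^+_{I_k}(0)=0$, which the paper makes explicit.
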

\begin{proof} 
The case $\alpha \leqslant 1$ follows immediately from the definition of norm. 

Let now $\alpha >1$. Since $\varphi^+_{I_k}$ satisfies \ainc{1} and $\varphi^+_{I_k}(0)=0$, we have 
\[
\varphi^+_{I_k}  \bigg( \frac{|\Delta_k f|}{L\alpha \beta |I_k| }  \bigg) 
\leqslant 
\frac{1}{\alpha} \varphi^+_{I_k}  \bigg( \frac{|\Delta_k f|}{ \beta |I_k|}  \bigg)
\] for each sub-interval of $I$, from which we obtain 
\[
\V^\phi_{I}  \Big(\frac{f}{L \alpha \beta}  \Big) 
\le
\frac{1}{\alpha} \V^\phi_{I}  \Big(\frac{f}{\beta}  \Big) 
\le
 1.
\]
The result now follows from the definition of the norm.
\end{proof}

\begin{lem}\label{lem:embedding}
Let $\phi, \psi \in \Phiw(I)$. Suppose that there exists $K>0$ such that
\begin{equation*}
\psi  \Big(x, \frac{t}{K} \Big) \le \phi(x, t)+1.
\end{equation*} 
Then
\begin{equation*}
\RBV^{\phi}(I) \hookrightarrow \RBV^\psi(I).
\end{equation*}
\end{lem}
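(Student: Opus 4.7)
The strategy is to first upgrade the pointwise hypothesis to an inequality on the level of the modular $\V^\phi_I$, then read off the continuous inclusion via Lemma~\ref{lem:modularnormab}.

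First, I would observe that taking the supremum over $x\in I_k$ on both sides of the hypothesis gives
\[
\psi^+_{I_k}(t/K) \le \phi^+_{I_k}(t) + 1 \qquad \text{for every sub-interval } I_k\subset I.
\]
Substituting $t = |\Delta_k f|/|I_k|$, multiplying by $|I_k|$, summing over a partition $(I_k)$, and taking the supremum over all partitions yields the modular-level inequality
\[
\V^\psi_I(f/K) \le \V^\phi_I(f) + |I|,
\]
since the sum of the additive errors is exactly $\sum_k |I_k| = |I|$.

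Second, to obtain the norm bound, fix any $\lambda > \|f\|_{\RBV^\phi(I)}$; by the definition of the Luxemburg quasi-seminorm (and monotonicity of $\V^\phi_I$ in the scalar factor), $\V^\phi_I(f/\lambda)\le 1$. Applying the inequality above with $f$ replaced by $f/\lambda$ gives $\V^\psi_I(f/(K\lambda)) \le 1 + |I|$. Lemma~\ref{lem:modularnormab}, with $\alpha = 1+|I|$ and $\beta = K\lambda$, then produces
\[
\|f\|_{\RBV^\psi(I)} \le LK(1+|I|)\,\lambda,
\]
and letting $\lambda\to \|f\|_{\RBV^\phi(I)}^+$ gives the continuous inclusion with explicit constant $LK(1+|I|)$.

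Third, I would check that $f\in \RBV^\psi(I)$, i.e.\ that $\V^\psi_I(\mu f)\to 0$ as $\mu\to 0^+$, since a finite quasi-seminorm does not automatically give membership in the modular space. The refinement uses \ainc{1} for $\psi$: for $\mu\in (0,1]$,
\[
\psi(x,\mu t/K) \le L\mu\, \psi(x, t/K) \le L\mu\bigl(\phi(x,t)+1\bigr),
\]
so the same partition argument yields $\V^\psi_I(\mu f/K) \le L\mu(\V^\phi_I(f)+|I|)$, and applied to $f/\lambda$ with $\lambda>\|f\|_{\RBV^\phi(I)}$ it becomes $\V^\psi_I(\mu f/(K\lambda))\le L\mu(1+|I|)\to 0$ as $\mu\to 0$.

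The main bookkeeping point is the following asymmetry: the additive constant $|I|$ in the modular inequality is harmless for the norm comparison (because Lemma~\ref{lem:modularnormab} tolerates $\alpha>1$), but to certify membership in $\RBV^\psi(I)$ one must extract an additional factor of $\mu$ via \ainc{1} so that the right-hand side actually vanishes as $\mu\to 0$.
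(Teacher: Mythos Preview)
Your proof is correct and follows essentially the same route as the paper: pass from the pointwise hypothesis to the modular-level inequality $\V^\psi_I(f/K)\le \V^\phi_I(f)+|I|$, apply it to $f/\lambda$ with $\lambda>\|f\|_{\RBV^\phi(I)}$, and invoke Lemma~\ref{lem:modularnormab} with $\alpha=1+|I|$, $\beta=K\lambda$ to obtain the constant $LK(1+|I|)$. Your third step, verifying membership in $\RBV^\psi(I)$ via \ainc{1}, is a welcome explicit check that the paper leaves implicit.
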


\begin{proof} 
Let $\lambda_\epsilon:=\|f\|_{\RBV^\phi(I)} + \epsilon$. Then $\V_{I}^{\phi} (f/\lambda_\epsilon) \le 1$.
From the assumption and the definition of the modular, we have
$\V_{I}^{\psi} ( f/(\lambda_\epsilon K) ) \le \V_{I}^{\phi}(f/\lambda_\epsilon) + |I| \le 1+|I|$.
By Lemma~\ref{lem:modularnormab}, 
\[
\|f\|_{\RBV^\psi (I)} \leq L K (1+|I|) \lambda_\varepsilon = L K (1+|I|) (\|f\|_{\RBV^\phi(I)} + \varepsilon),
\]
which concludes the proof.
\end{proof}

We now show that $\RBV^\phi(I)$ contains all Lipschitz functions and is contained in the set 
of absolutely continuous functions when $\phi$ satisfies suitable conditions.
We denote by $\Lip(I)$ the space of Lipschitz functions on $I$ and by $\AC(I)$ the space of absolutely continuous functions in $I$.

\begin{lem}\label{lem:embedding2}
Let $\phi\in \Phiw(I)$ satisfy \azero{}, \ainc{} and \adec{}. Then 
\[
\Lip(I)\subseteq \RBV^\phi(I) \subseteq \AC(I).
\]
\end{lem}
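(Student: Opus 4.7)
The plan is to establish the two inclusions separately by direct estimation, using only \azero{} and \ainc{p} (with $p>1$); the \adec{} hypothesis is available but not really needed for this particular lemma. In both arguments the key observation is that $\phi^+_{A}$ inherits \azero{} and \ainc{p} from $\phi$ with the same constants, since pointwise suprema preserve monotonicity and almost-monotonicity.

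For the inclusion $\Lip(I)\subseteq\RBV^\phi(I)$, let $f\in\Lip(I)$ with constant $L_f$. For any partition $(I_k)$ the Lipschitz bound gives $|\Delta_k f|/|I_k|\le L_f$, so by monotonicity and the dominance $\phi^+_{I_k}\le\phi^+_I$,
\[
\V^\phi_I(\lambda f)\le \sum_k\phi^+_{I_k}(\lambda L_f)|I_k|\le \phi^+_I(\lambda L_f)\,|I|.
\]
It therefore suffices to show $\phi^+_I(t)\to 0$ as $t\to 0^+$. From \azero{} we have $\phi^+_I(\beta)\le 1$, and \ainc{p} applied to $\phi^+_I$ gives $\phi^+_I(t)\le L_p(t/\beta)^p$ for $t\le\beta$. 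Hence $\V^\phi_I(\lambda f)\to 0$ as $\lambda\to 0^+$, so $f\in\RBV^\phi(I)$.

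For the inclusion $\RBV^\phi(I)\subseteq\AC(I)$, fix $f\in\RBV^\phi(I)$ and choose $\lambda>0$ with $\V^\phi_I(\lambda f)\le 1$. Given a finite family of pairwise disjoint closed subintervals $(I_k)$ of $I$, set $r_k:=\lambda|\Delta_k f|/|I_k|$, extend $(I_k)$ to a partition of $I$, and note that because all terms are nonnegative we still have $\sum_k\phi^+_{I_k}(r_k)|I_k|\le 1$. The goal is to bound $\sum_k\lambda|\Delta_k f|=\sum_k r_k|I_k|$, and I would split at a threshold $K\ge 1/\beta$. The indices with $r_k\le K$ contribute at most $K\sum_k|I_k|$, which we make small by restricting the total length. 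For the indices with $r_k>K$, the key estimate is the uniform lower bound $\phi^+_{I_k}(t)\ge(\beta^p/L_p)\,t^p$ valid for all $t\ge 1/\beta$ and all $k$, obtained by combining \azero{} (which yields $\phi^+_{I_k}(1/\beta)\ge 1$, uniformly in $k$) with \ainc{p}. Writing $r_k|I_k|=\phi^+_{I_k}(r_k)|I_k|\cdot r_k/\phi^+_{I_k}(r_k)$ and using this bound gives
\[
\sum_{r_k>K}r_k|I_k|\le \frac{L_p}{\beta^p K^{p-1}}\sum_k\phi^+_{I_k}(r_k)|I_k|\le \frac{L_p}{\beta^p K^{p-1}}.
\]
Given $\epsilon>0$, I would first choose $K$ so large that this second sum is below $\lambda\epsilon/2$, and then choose $\delta>0$ so that $K\delta<\lambda\epsilon/2$; any finite disjoint family with total length less than $\delta$ then satisfies $\sum_k|\Delta_k f|<\epsilon$, which is absolute continuity.

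The main obstacle is producing a lower bound for $\phi^+_{I_k}(r_k)$ at large $r_k$ that is uniform in $k$: the functions $\phi^+_{I_k}$ genuinely depend on $k$, but \azero{} pins down the common value $1/\beta$ at which they all dominate $1$, and \ainc{p} then propagates this into a $k$-independent power-type bound. This is precisely the step that requires $p>1$ rather than just the defining almost-monotonicity of $\phi(x,t)/t$ built into $\Phiw(I)$.
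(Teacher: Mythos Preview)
Your argument is correct, but it proceeds differently from the paper. The paper does not estimate directly: it invokes the classical Riesz result $\Lip(I)\subseteq\RBV^s(I)\subseteq\AC(I)$ for $s>1$ from \cite[Proposition~2.52]{AppBM14}, and then sandwiches $\RBV^\phi$ between $\RBV^q$ and $\RBV^p$ via the comparison Lemma~\ref{lem:embedding}, using \adec{q} and \azero{} to obtain $\phi(x,\beta t)\le Lt^q+1$ for the left inclusion and \ainc{p} and \azero{} to obtain $(\beta t)^p\le L\phi(x,t)+1$ for the right one. Your proof is self-contained and bypasses both the classical result and the embedding lemma: the threshold-splitting argument for $\RBV^\phi(I)\subseteq\AC(I)$ is essentially a direct rederivation of the Riesz lemma adapted to $\phi^+_{I_k}$, with the key uniformity coming from \azero{} pinning down $\phi^+_{I_k}(1/\beta)\ge 1$ independently of $k$. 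A byproduct of your approach, as you note, is that \adec{} is not actually needed---the first inclusion follows from \azero{} and the built-in \ainc{1} of $\Phiw(I)$, and the second from \azero{} and \ainc{p}---whereas the paper's reduction genuinely uses \adec{q} to compare with $\RBV^q$. The paper's route is shorter given its infrastructure; yours extracts sharper hypotheses.
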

\begin{proof}
From \cite[Proposition~2.52]{AppBM14}, we know that 
\[
\Lip(I)\subseteq \RBV^s(I) \subseteq \AC(I), 
\]
with $ s \in (1, \infty)$, where $\RBV^s(I)$ corresponds to the choice $\phi(x,t)=t^s$.
When $t\ge \frac1\beta$, it follows from \ainc{p} that 
$(\beta t)^p\phi(x,\frac1\beta)\le L\phi(x,t)$ and so it follows from \azero{} that 
\[
(\beta t)^p\le L\phi(x,t) + 1.
\]
From \adec{q} we conclude that $\phi(x,\beta t)\le Lt^q \phi(x,\beta)$ when $t\ge 1$. Hence \azero{} implies that 
\[
\phi(x,\beta t)\le Lt^q + 1.
\]
By Lemma~\ref{lem:embedding} and these inequalities, we conclude that
\[
\Lip(I)\subseteq \RBV^q(I) \subseteq \RBV^\phi (I)\subseteq \RBV^p(I)\subseteq \AC(I). \qedhere
\]
\end{proof}

\begin{thm}
If $\phi \in \Phiw(I)$, then $\RBVz^\phi(I)$ is a quasi-Banach space.
\label{theo:B-space}
\end{thm}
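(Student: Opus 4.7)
My plan is the standard extraction-and-Fatou strategy for completeness of modular spaces. Let $(f_j)\subset \RBVz^\phi(I)$ be Cauchy. The preceding theorem lets me assume $\phi^+_I$ is non-degenerate, since otherwise $\RBVz^\phi(I)=\{0\}$ is trivially complete. I first extract a pointwise limit and then show it is the norm limit.

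For the pointwise limit, fix $x\in(a,b]$ and pick any partition of $I$ whose first interval is $[a,x]$. Since $f_j(a)=f_k(a)=0$, for any $\lambda>\|f_j-f_k\|_{\RBV^\phi(I)}$ the single term on $[a,x]$ gives
\[
\phi^+_{[a,x]}\bigg(\frac{|f_j(x)-f_k(x)|}{\lambda(x-a)}\bigg)(x-a)\le \V^\phi_I((f_j-f_k)/\lambda)\le 1.
\]
Because $\phi^+_{[a,x]}\in\Phiw$ is non-degenerate, $\phi^+_{[a,x]}(t)\to\infty$ as $t\to\infty$, and one can invert to obtain $|f_j(x)-f_k(x)|\le C_x\lambda$. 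The Cauchy property then yields a pointwise limit $f(x):=\lim_j f_j(x)$ with $f(a)=0$.

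The main step is showing $\|f-f_j\|_{\RBV^\phi(I)}\to 0$. Given $\epsilon>0$, choose $N$ so that $\|f_m-f_j\|<\epsilon$ for $m,j\ge N$, so $\V^\phi_I((f_m-f_j)/\lambda')\le 1$ whenever $\lambda'>\epsilon$. Fix $\lambda>\lambda'>\epsilon$ and any finite partition $(I_k)$ of $I$. For each $k$ with $|\Delta_k(f-f_j)|>0$, the strict inequality $\lambda>\lambda'$ together with $|\Delta_k(f_m-f_j)|\to|\Delta_k(f-f_j)|$ ensures
\[
\frac{|\Delta_k(f-f_j)|}{\lambda|I_k|}<\frac{|\Delta_k(f_m-f_j)|}{\lambda'|I_k|}
\]
for all sufficiently large $m$; finiteness of the partition allows me to choose $m$ uniformly in $k$. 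Monotonicity of $\phi^+_{I_k}$ and summation (indices with $|\Delta_k(f-f_j)|=0$ contribute zero on the left) then yield
\[
\sum_k \phi^+_{I_k}\bigg(\frac{|\Delta_k(f-f_j)|}{\lambda|I_k|}\bigg)|I_k|\le \V^\phi_I((f_m-f_j)/\lambda')\le 1.
\]
Taking the supremum over partitions gives $\V^\phi_I((f-f_j)/\lambda)\le 1$, so $\|f-f_j\|_{\RBV^\phi(I)}\le\lambda$; letting $\lambda\searrow\epsilon$ finishes the convergence step.

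The bound $\V^\phi_I((f-f_j)/\lambda)\le 1$ combined with property (4) of Definition~\ref{def:quasiConvexsemimodular} (applied with $y=0$) yields $\V^\phi_I(\mu(f-f_j))\to 0$ as $\mu\to 0$, putting $f-f_j$ in the modular space $\RBV^\phi(I)$. Since $\RBV^\phi(I)$ is a vector space containing $f_j$, also $f\in\RBV^\phi(I)$, and $f(a)=0$ places $f$ in $\RBVz^\phi(I)$. The main obstacle is the Fatou step: weak $\Phi$-functions are not assumed left-continuous, so one cannot directly pass $m\to\infty$ inside $\phi^+_{I_k}$; the trick of separating $\lambda>\lambda'$ moves the argument into a strictly smaller region where monotonicity alone suffices.
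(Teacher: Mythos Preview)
Your proof is correct and follows the same overall extraction-plus-Fatou strategy as the paper, but the key limit step is handled by a genuinely different device. The paper deals with the lack of left-continuity of $\phi$ by passing to an equivalent $\psi\in\Phic(I)$ (via \cite[Lemma~2.2.1]{HarH19}); since $\psi^+_{I_k}$ is then left-continuous and lower semicontinuous, one gets a direct Fatou inequality $\V^\psi\big(\tfrac{f_i-f}{L\epsilon},(I_k)\big)\le\liminf_j\V^\psi\big(\tfrac{f_i-f_j}{L\epsilon},(I_k)\big)$, at the cost of the equivalence constant $L^2$ in the final bound. Your $\lambda>\lambda'$ trick is more elementary: it uses only monotonicity of $\phi^+_{I_k}$, avoids the external lemma, and delivers the sharp estimate $\|f-f_j\|_{\RBV^\phi(I)}\le\epsilon$. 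For the pointwise limit, the paper establishes the uniform embedding $\sup_I|f|\le C\|f\|_{\RBV^\phi(I)}$ by evaluating $\phi$ at the fixed endpoint $a$, whereas you obtain an $x$-dependent constant $C_x$; both suffice for pointwise convergence, though the paper's uniform version is of independent interest. One small remark: the fact that $\phi^+_{[a,x]}(t)\to\infty$ follows already from $\phi^+_{[a,x]}(t)\ge\phi(a,t)\to\infty$, so non-degeneracy is not actually needed at that point.
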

\begin{proof}
Fix $I=[a,b]$. We first prove that 
\begin{equation}\label{eq:Linftyembedding}
\sup_I |f| \leq C \|f\|_{\RBV^{\phi}(I)}
\end{equation}
when $f\in \RBVz^{\phi}(I)$. 
Denote $I_1=[a, x]$. Since $f(a)=0$, it follows from $\Delta f(I_1)=f(x)$ that 
\[
\phi  \bigg(a, \frac{|f(x)|}{|I_1|}  \bigg) |I_1| 
\le 
\phi_{I_1}^+  \bigg( \frac{|\Delta_{I_1} f|}{|I_1|}  \bigg) |I_1| \leq \V^{\phi}_I(f). 
\]
We apply this inequality to the function $\frac{f}{\|f\|_{\RBV^{\phi}(I)}+\epsilon}$ 
where $\epsilon>0$. Thus
\[
\phi\bigg(a, \frac{|f(x)|}{|I_1|(\|f\|_{\RBV^{\phi}(I)}+\epsilon)} \bigg) 
\le 
\V^{\phi}_I\bigg(\frac{f}{\|f\|_{\RBV^{\phi}(I)}+\epsilon} \bigg) 
\le
1.
\]
Since $\phi(a,t)\to \infty$ as $t\to \infty$, we conclude that the argument of $\phi$ on the 
left-hand side is bounded. Thus 
\begin{equation*}\label{eq:pointwise}
|f(x)| \le C\,|I|\, (\|f\|_{\RBV^{\phi}(I)} + \epsilon).
\end{equation*}
 The estimate \eqref{eq:Linftyembedding} now follows as $\epsilon \to 0^+$.
 
Let $(f_i)$ be a Cauchy sequence in $\RBV^\phi_0(I)$. 
By \eqref{eq:Linftyembedding}, it is also a Cauchy sequence in $L^\infty(I)$, which ensures that $f:= \lim _{i \to \infty} f_i$ exists. 
For $\epsilon >0$, there exists $N_\epsilon \in \mathbb N$ such that $i,j > N_\epsilon$ implies 
$\V ^{\phi}_I(\frac{f_i -f_j}\epsilon) \leq 1$, since $\|f_i-f_j\|_{\RBV^{\phi}(I)} < \epsilon$. 
By \cite[Lemma~2.2.1]{HarH19} there exists $\psi\in \Phic(I)$ with $\psi\simeq\phi$. 
Since $\psi$ is left-continuous, $\psi^+_{I_k}$ is left-continuous (see comment after the proof of \cite[Lemma~2.1.8]{HarH19}) and lower semicontinuous by \cite[Lemma~2.1.5]{HarH19}. Taking this into account, we get
\[
\V^{\psi} \Big( \frac{f_i-f}{L\epsilon}, (I_k) \Big) 
= 
\V^{\psi} \Big( \frac{f_i- \lim_{j \to \infty}f_j}{L\epsilon}, (I_k) \Big) 
\le 
\liminf_{j\to \infty} \V ^{\psi} \Big( \frac{f_i- f_j}{L\epsilon}, (I_k) \Big). 
\]
When $i > N_\epsilon$, it follows from Lemma~\ref{lem:equivalence} that 
\begin{align*}
\V^{\phi} \Big( \frac{f_i-f}{L^2\epsilon}, (I_k) \Big) 
&\le
\V^{\psi} \Big( \frac{f_i-f}{L\epsilon}, (I_k) \Big) 
\le
\liminf_{j\to \infty} \V^{\psi} \Big( \frac{f_i-f_j}{L\epsilon}, (I_k) \Big) \\
&\le
\liminf_{j\to \infty} \V^{\phi} \Big( \frac{f_i-f_j}{\epsilon}, (I_k) \Big)
\le
\liminf_{j\to \infty} \V_I^{\phi} \Big( \frac{f_i-f_j}{\epsilon}\Big) 
\le 
1.
\end{align*}
Taking the supremum over partitions $(I_k)$, we find that 
\[
\V_I^{\phi} \Big( \frac{f_i-f}{L^2\epsilon}\Big) \le 1
\qquad\text{and so}\qquad
\|f_i-f\|_{\RBV^{\phi}} \le L^2\epsilon.
\]
Thus $f \in \RBV ^{\phi}_0(I)$ and $(f_i)$ converges to $f$ in $\RBV^{\phi}_0(I)$.
\end{proof}


\section{Variants of the variation}\label{sect:variants}

In Definition~\ref{def:variation} we defined $\V^\phi_I$, $\Volim^\phi_I$ and $\Vulim^\phi_I$ 
by taking supremum or limit superior of partitions and using either $\phi^+$ or $\phi^-$. 
In this section, we consider the impact of these choices. 
Since all the partitions in $\Volim^\phi_I$ are allowed in the supremum in $\V^\phi_I$ and $\phi^-_{I_k}\le\phi^+_{I_k}$, 
we see that 
\[
\Vulim^\phi_I\le \Volim^\phi_I \le \V^\phi_I.
\]
When $\phi$ is convex and independent of $x$, the 
opposite inequalities hold, as well. Indeed, $\Volim^\phi_{I}(f)=\Vulim^\phi_{I}(f)$ 
is trivial in this case.

\begin{lem}\label{lem:stdGrowthLimit}
If $\phi \in\Phic$, then
\[
\V^\phi_{I}(f)
=
\Volim^\phi_{I}(f).
\]
\end{lem}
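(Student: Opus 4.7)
The inequality $\Volim^\phi_I(f) \le \V^\phi_I(f)$ is immediate from the definitions, since the $\limsup$ is taken over a subfamily of the partitions admitted in the supremum, and since $\phi$ is $x$-independent there is no distinction between $\phi^+_{I_k}$, $\phi^-_{I_k}$ and $\phi$. The real content is the reverse inequality.

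The plan for $\V^\phi_I(f) \le \Volim^\phi_I(f)$ is to exploit convexity through the classical fact that \emph{refinement of a partition does not decrease the sum}. More precisely, I would first prove the one-interval statement: if $[a,b] = [a,c]\cup[c,b]$ with $a<c<b$, then
\[
\phi\!\left(\frac{|f(b)-f(a)|}{b-a}\right)(b-a)
\le
\phi\!\left(\frac{|f(c)-f(a)|}{c-a}\right)(c-a)
+
\phi\!\left(\frac{|f(b)-f(c)|}{b-c}\right)(b-c).
\]
This follows by writing $\lambda := (c-a)/(b-a)$, observing that by the triangle inequality
\[
\frac{|f(b)-f(a)|}{b-a}
\le
\lambda\,\frac{|f(c)-f(a)|}{c-a} + (1-\lambda)\,\frac{|f(b)-f(c)|}{b-c},
\]
and then applying the monotonicity and convexity of $\phi$, multiplying through by $b-a$.

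Iterating this one-step refinement estimate across all sub-intervals shows that for any partition $(I_k)$ and any refinement $(J_m)$ of $(I_k)$,
\[
\V^\phi(f,(I_k)) \le \V^\phi(f,(J_m)).
\]
Now, given a fixed partition $(I_k)$ and $\delta>0$, I construct a refinement $(J_m^\delta)$ by subdividing each $I_k$ into equal pieces of length at most $\delta$; this refinement has mesh $|(J_m^\delta)|\le \delta$. Combining,
\[
\V^\phi(f,(I_k))
\le
\V^\phi(f,(J_m^\delta))
\le
\sup_{|(J_m)|\le \delta}\V^\phi(f,(J_m)).
\]
Letting $\delta\to 0$ yields $\V^\phi(f,(I_k))\le \Volim^\phi_I(f)$, and taking the supremum over $(I_k)$ completes the proof.

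The only non-trivial step is the convexity-based refinement estimate, and even that is routine: it is essentially the observation that $t\mapsto \phi(t)$ perimeter-like functionals are increased under subdivision. The left-continuity and the non-dependence of $\phi$ on $x$ are what make the two extremes $\phi^\pm_{I_k}$ coincide with $\phi$, so the argument works uniformly for both $\Volim^\phi_I$ and (using the same chain) shows also $\Vulim^\phi_I = \V^\phi_I$ in the $x$-independent convex case.
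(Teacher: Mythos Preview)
Your proof is correct and follows essentially the same route as the paper: both show that refinement of a partition does not decrease the Riesz sum (via convexity of $\phi$ applied to the convex combination $\frac{|\Delta f(I_k)|}{|I_k|}\le\sum_j\frac{|I_j'|}{|I_k|}\frac{|\Delta f(I_j')|}{|I_j'|}$), and then pass from an arbitrary partition to one of small mesh. Your write-up is slightly more explicit (isolating the two-interval case and spelling out the limiting step $\delta\to 0$), but the argument is the same.
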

\begin{proof}
Let $(I_k)$ be a partition of $I$ and $(I_k')$ be its subpartition. 
Suppose $I_k=\cup_{j=j_k}^{j_{k+1}-1} I_j'$. Then 
\[
\frac{|\Delta f(I_k)|}{|I_k|} 
\le
\sum_{j=j_k}^{j_{k+1}-1}\frac{|\Delta f(I_j')|}{|I_k|} 
=
\sum_{j=j_k}^{j_{k+1}-1}\frac{|I_j'|}{|I_k|} \frac{|\Delta f(I_j')|}{|I_j'|}. 
\]
Since the coefficients' sum equals 1, it follows from convexity that 
\[
\phi \bigg(\frac{|\Delta f(I_k)|}{|I_k|} \bigg) |I_k|
\le
\sum_{j=j_k}^{j_{k+1}-1}\frac{|I_j'|}{|I_k|} \phi \bigg(\frac{|\Delta f(I_j')|}{|I_j'|} \bigg)|I_k|
=
\sum_{j=j_k}^{j_{k+1}-1} \phi \bigg(\frac{|\Delta f(I_j')|}{|I_j'|} \bigg)|I_j'|.
\]
Thus $\V^\phi(f, (I_k)) \le \V^\phi(f, (I_k'))$. Since this holds for any subpartition, we 
see that we can always move to a partition with half as large   $|(I_k)|$ but no smaller 
$\V^\phi(f, (I_k))$. Hence $\V^\phi_I\le \Volim^\phi_I$.
\end{proof}

We just showed that $\Volim^\phi=V^\phi$ when $\phi\in\Phic$. 
However, if $\phi$  depends on $x$, then it is possible that $\Volim^\phi_I < \V^\phi_I$.

\begin{eg}\label{eg:limitDifferent}
Let $I:=[0,1]$ and define $\phi\in\Phic(I)$ by $\phi(x,t):=(x+1)t^2$.
Note that $\phi$ satisfies \azero, \aone, \ainc2 and \adec2.
Let $f$ be the identity function.
Then
\[
\V_I^\phi(f)
\geq
\phi^+_I \bigg(\frac{|\Delta f(I)|}{|I|}  \bigg) |I|
=\phi^+_I(1)
=2.
\]
Let next $\epsilon\in(0,1)$ and let $(I_k)$ be a partition with $|(I_k)|<\epsilon$. 
If $I_k=[a_k, b_k]$, then 
\[
\phi \bigg(a_k, \frac{|\Delta_k f|}{|I_k|}  \bigg)
\le
\phi^+_{I_k} \bigg(\frac{|\Delta_k f|}{|I_k|}  \bigg)
\le
\phi \bigg(b_k, \frac{|\Delta_k f|}{|I_k|}  \bigg)
\le
\phi \bigg(a_k+\epsilon, \frac{|\Delta_k f|}{|I_k|}  \bigg)
\]
since $\phi$ is increasing in $x$. If $f$ is the identity function, then $\frac{|\Delta_k f|}{|I_k|}=1$ and 
\[
\int_0^1 \phi(x,1)\, dx
\le
\sum_{k=1}^{n}\phi^+_{I_k} \bigg(\frac{|\Delta_k f|}{|I_k|}  \bigg) |I_k|
\le
\int_\epsilon^{1+\epsilon} \phi(x,1)\, dx.
\]
Since $\phi(x,1)=x+1$, we obtain as $\epsilon\to 0$ that
\[
\Volim_I^\phi(f) = \int_0^1 \phi(x,1)\, dx = \frac32.
\]
\end{eg}

We next show that $\phi^+$ and $\phi^-$ give the same result at the limit under 
the stronger continuity condition \vaone{}. Example~\ref{eg:vaNeeded} shows that 
the result does not hold under \aone{}.

\begin{lem}\label{lem:phiMinusLimit}
If $\phi \in\Phic(\Omega)$ satisfies \vaone{} and \dec{}, then
\[
\Volim^\phi_I(f)
=
\underline{\mathrm V}^\phi_I(f).
\]
\end{lem}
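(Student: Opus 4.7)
The plan is to show the two inequalities $\Vulim^\phi_I(f) \le \Volim^\phi_I(f)$ and $\Volim^\phi_I(f) \le \Vulim^\phi_I(f)$ separately. The first is immediate from $\phi^-_{I_k}(t) \le \phi^+_{I_k}(t)$. For the reverse inequality, if $\Vulim^\phi_I(f) = \infty$ there is nothing to prove, so I would assume it is finite. By the definition of $\limsup$ there exist $\delta_0>0$ and $M<\infty$ such that $\sum_k \phi^-_{I_k}(t_k)|I_k| \le M$ whenever $|(I_k)|<\delta_0$, where $t_k := |\Delta_k f|/|I_k|$. In particular each term obeys $\phi^-_{I_k}(t_k) \le M/|I_k|$, which is exactly the kind of pointwise bound that \vaone{} is designed to handle.

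Fix $K := M+1$ and let $\omega$ be the modulus of continuity given by \vaone{} for this $K$. For each $k$ I would choose $y_k\in I_k$ with $\phi(y_k,t_k) \le \phi^-_{I_k}(t_k) + |I_k|$; for $|I_k|$ small enough this is $\le K/|I_k|$, so the hypothesis of \vaone{} is satisfied once $I_k$ is embedded in a ball of comparable size. Applying \vaone{} with this $y_k$ to an arbitrary $x\in I_k$ and then taking the supremum in $x$ gives
\[
\phi^+_{I_k}\bigg(\frac{t_k}{1+\omega(|I_k|)}\bigg) \le \phi^-_{I_k}(t_k) + |I_k| + \omega(|I_k|).
\]
The condition \dec{q} yields $\phi^+_{I_k}(t_k) \le (1+\omega(|I_k|))^q\,\phi^+_{I_k}(t_k/(1+\omega(|I_k|)))$, so multiplying by $|I_k|$, summing over $k$, and using $\sum_k |I_k|^2 \le |(I_k)|\,|I|$ I would obtain
\[
\sum_k \phi^+_{I_k}(t_k)|I_k| \le (1+\omega(|(I_k)|))^q\bigg[\sum_k \phi^-_{I_k}(t_k)|I_k| + (|(I_k)|+\omega(|(I_k)|))|I|\bigg].
\]
Taking the limit superior as $|(I_k)|\to 0$ and invoking $\omega(|(I_k)|)\to 0$ then produces $\Volim^\phi_I(f) \le \Vulim^\phi_I(f)$, completing the argument.

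The step I expect to be most delicate is the selection of $y_k$: since the infimum defining $\phi^-_{I_k}(t_k)$ need not be attained, one must approximate it closely enough to carry the quantitative bound $\phi(y_k,t_k)\le K/|B|$ demanded by \vaone{} and yet have the resulting approximation error vanish uniformly in $k$ as $|(I_k)|\to 0$. Coupled with this is the reliance on the exact \dec{q} (rather than its almost variant \adec{q}) so that the multiplicative factor $(1+\omega(|(I_k)|))^q$ tends to exactly $1$ and no parasitic constant survives in the limit.
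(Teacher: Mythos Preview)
Your argument is correct and follows essentially the same route as the paper: bound each term by $\phi^-_{I_k}(t_k)\le M/|I_k|$, invoke \vaone{} to pass from $\phi^-_{I_k}$ to $\phi^+_{I_k}(\cdot/(1+\omega))$, and use \dec{q} to remove the factor $(1+\omega)^{-1}$. The only difference is cosmetic: you approximate the infimum by an explicit $y_k$ with error $|I_k|$, producing the extra $\sum_k|I_k|^2$ term, whereas the paper takes the infimum directly (which is legitimate since \vaone{} applies to every $y$ with $\phi(y,t_k)\le K/|I_k|$, and for any $\eta>0$ such a $y$ exists with $\phi(y,t_k)<\phi^-_{I_k}(t_k)+\eta$, so one may let $\eta\to 0$).
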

\begin{proof}
Clearly, $\Vulim^\phi_I\le \Volim^\phi_I$ so we show that $\Volim^\phi_I\le\Vulim^\phi_I$. 
We assume that $\Vulim^\phi_I(f)<\infty$, since otherwise there is nothing to prove. 
For $\epsilon\in (0,1)$ we choose $\delta\in (0, \epsilon)$ such that 
\[
\sup_{|(I_k)|<\delta} \sum_{k=1}^n \phi^-_{I_k} \bigg(\frac{|\Delta_kf|}{|I_k|}  \bigg) |I_k| \le (1+\epsilon) \Vulim^\phi_I(f).
\]
Let $(I_k)$ be a partition with $|(I_k)|<\delta$. By the previous line,  
\[
\phi^-_{I_k} \bigg(\frac{|\Delta_kf|}{|I_k|}  \bigg) < \frac{2 \Vulim^\phi_I(f)}{|I_k|}.
\]
Therefore it follows by \vaone{} and \dec{q} that 
\[
\phi^+_{I_k} \bigg(\frac{|\Delta_kf|}{|I_k|}  \bigg)
\le 
(1+\omega(\epsilon))^q\bigg[\phi^-_{I_k} \bigg(\frac{|\Delta_kf|}{|I_k|}  \bigg) + \omega(\epsilon)\bigg],
\]
where $\omega$ is a modulus of continuity from \vaone{} with $K:=2 \Vulim^\phi_I(f)$.
Hence
\begin{align*}
\sum_{k=1}^n \phi^+_{I_k} \bigg(\frac{|\Delta_kf|}{|I_k|}  \bigg) |I_k| 
& \le
(1+\omega(\epsilon))^q \bigg[\sum_{k=1}^n \phi^-_{I_k} \bigg(\frac{|\Delta_kf|}{|I_k|}  \bigg) |I_k| +\omega(\epsilon) |I|\bigg]\\
& \le
(1+\omega(\epsilon))^q\big[(1+\epsilon)\Vulim^\phi_I(f)+\omega(\epsilon) |I|\big].
\end{align*}
Since this holds for all $(I_k)$ with $|(I_k)|<\delta$, we conclude that 
\[
\Volim^\phi_I(f)\le (1+\omega(\epsilon))^q[(1+\epsilon)\Vulim^\phi_I(f)+\omega(\epsilon) |I|]
\to \Vulim^\phi_I(f),
\]
as $\epsilon\to 0$. Combined with $\Vulim^\phi_I\le \Volim^\phi_I$, this gives 
$\Vulim^\phi_I= \Volim^\phi_I$.
\end{proof}

\begin{eg}\label{eg:vaNeeded}
Let $p(x):=1+\frac1{\log(1/x)}$ for $x\in(0,\frac12]$ and $p(0):=1$. Set $\phi(x,t):=t^{p(x)}$. We consider the function 
$f:=\chi_{(0,\frac12]}$ and a partition $(I_k)$ of $I:=[0,\frac12]$. Then $\Delta f(I_k)=0$ unless $k=1$. Suppose that 
$I_1:=[0, x]$, $x\in (0, \frac12)$. Then 
\[
\phi^+_{I_1}\bigg(\frac{|\Delta f(I_1)|}{|I_1|}\bigg)|I_1| = x^{1-p^+_{I_1}} = x^{-\frac1{\log(1/x)}} = e
\]
and
\[
\phi^-_{I_1}\bigg(\frac{|\Delta f(I_1)|}{|I_1|}\bigg)|I_1| = x^{1-p^-_{I_1}} = x^{0} = 1.
\]
Since all other terms vanish, we see that $\Volim^\phi_I(f)=e$ and $\Vulim^\phi_I(f)=1$. 
Note that $p$ is $\log$-Hölder continuous so that $\phi$ satisfies \aone{} \cite[Proposition~7.1.2]{HarH19}.
\end{eg}

The next example shows that $\phi^+$ and $\phi^-$ do not give the same result without 
the limiting process, i.e.\ for $V^\phi_I$ and a version of it with $\phi^-_{I_k}$.

\begin{eg}
Let $I:=[0,3]$ and define $\phi\in\Phic(I)$ by
\[
\phi(x,t) := t^{\max\{2,x\}}.
\]
Note that $\phi$ satisfies \azero{}, \aone{}, \vaone{}, \ainc2 and \adec3.
For $\alpha > \frac3\beta$ we define $f_\alpha:I \to \R$ by
$f_\alpha(x) := \alpha\min\{x, 1\}$. Then
\[
\V_I^\phi(\beta f_\alpha)
\geq
\phi^+_I \bigg(\frac{\beta|\Delta f_\alpha(I)|}{|I|}  \bigg) |I|
= 3 \phi^+_I \Big(\frac{\beta \alpha}{3} \Big)
= \frac{(\beta \alpha)^3}{9}.
\]
Let $(I_k)$ be a partition of $I$. If $I_k \cap [0,1]$ is non-empty, then
\[
\phi_{I_k}^- \bigg(\frac{|\Delta_k f_\alpha|}{|I_k|}  \bigg)
\leq 
\phi_{I_k}^-(\alpha)
\le
\alpha^2,
\]
since $|\Delta_k f_\alpha| \leq \alpha\,|I_k|$. If $I_k \cap [0,1]$ is empty, then $\Delta_k f_\alpha = 0$.
Thus
\[
\sum_{k=1}^{n}\phi^-_{I_k} \bigg(\frac{|\Delta_k f_\alpha|}{|I_k|}  \bigg) |I_k|
\leq \sum_{k=1}^{n} \alpha^2|I_k|
= 3\alpha^2,
\]
and hence
\[
\sup_{(I_k)} \sum_{k=1}^n \phi^-_{I_k} \bigg(\frac{|\Delta_kf_\alpha|}{|I_k|}  \bigg) |I_k|
\leq 
3\alpha^2
\le 
\frac{27}{\beta^3 \alpha}\V_I^\phi(\beta f_\alpha).
\]
As $\alpha\to\infty$, this shows that the inequality $\V_I^\phi(\beta f_\alpha) \le 
\sup_{(I_k)} \sum_{k=1}^n \phi^-_{I_k}(\frac{|\Delta_kf_\alpha|}{|I_k|}) |I_k|$ 
does not hold for any fixed $\beta>0$.
\end{eg}

The next lemma shows that $V^\phi_I$ and $\Volim^\phi_I$ do define 
equivalent norms, even though they are not themselves equivalent.

\begin{lem}\label{lem:phiMinusLimi}
If $\phi \in\Phiw(\Omega)$ satisfies \aone{}, then 
there exist a constant $\beta \in (0,1]$ such that
\[
\Volim^\phi_I(f) \leq 1
\implies
\V^\phi_I(\beta f) \leq 1.
\]
Furthermore, $\|\cdot\|_{\Volim^\phi_I} \approx \|\cdot\|_{\V^\phi_I}$.
\end{lem}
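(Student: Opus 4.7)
The plan is to reduce the implication $\Volim^\phi_I(f)\le 1 \Rightarrow \V^\phi_I(\beta f)\le 1$ to a modular-level inequality that can be handled with \aone{} at the scale of each individual interval of an arbitrary partition. First I would apply Lemma~\ref{lem:equivalence} and \cite[Lemma~2.2.1]{HarH19} to reduce to the case $\phi\in\Phic(\Omega)$, absorbing the equivalence constant into the final $\beta$; from now on $\phi(x,\cdot)$ and each $\phi^+_A(\cdot)$ are convex.

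The key technical step, from which everything else follows routinely, is to show that for every partition $(I_k)$ of $I$ and every $\eta>0$,
\[
\sum_{k} \phi^-_{I_k}\!\bigg(\frac{|\Delta_k f|}{|I_k|}\bigg)|I_k| \;\le\; \Volim^\phi_I(f)+\eta.
\]
To prove this I would pick $\delta>0$ (supplied by $\Volim^\phi_I(f)\le 1$) so that every partition $(I_j')$ of $I$ with $|(I_j')|<\delta$ satisfies $\sum_j\phi^+_{I_j'}(\frac{|\Delta_j f|}{|I_j'|})|I_j'|\le \Volim^\phi_I(f)+\eta$. Refining $(I_k)$ to such a $(I_j')$ by subdividing each large $I_k$, write $s_j=|\Delta_j f|/|I_j'|$, $\lambda_j=|I_j'|/|I_k|$, and use the triangle inequality $|\Delta_k f|\le \sum_{j\in J_k}|\Delta_j f|$ together with convexity of $\phi(x,\cdot)$ to get, for every $x\in I_k$,
\[
\phi\!\bigg(x,\tfrac{|\Delta_k f|}{|I_k|}\bigg)\,|I_k| \;\le\; \sum_{j\in J_k}\phi(x,s_j)\,|I_j'|.
\]
Choosing $x$ in $I_k$ that nearly attains $\phi^-_{I_k}(|\Delta_k f|/|I_k|)$ and applying \aone{} at the scale of $I_k$ to pass from $\phi(x,s_j)$ down to $\phi^+_{I_j'}(s_j)+1$ (up to a fixed multiplicative rescaling of the argument) would then yield the per-interval bound, and summing over $k$ gives the display.

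Given the key step, the rest is mechanical. Each individual term satisfies $\phi^-_{I_k}(|\Delta_k f|/|I_k|)|I_k|\le 1+\eta\le 2$, so \aone{} with $K:=2$ supplies a constant $\beta_0\in(0,1]$ such that $\phi^+_{I_k}(\beta_0 |\Delta_k f|/|I_k|)\le \phi^-_{I_k}(|\Delta_k f|/|I_k|)+1$ for every $k$. Summing,
\[
\sum_k \phi^+_{I_k}\!\bigg(\beta_0\tfrac{|\Delta_k f|}{|I_k|}\bigg)|I_k|\;\le\; 1+\eta+|I|.
\]
Using \ainc{1} with constant $L$, rescale to $\beta:=\beta_0/(L(2+|I|))$ to obtain $\sum_k \phi^+_{I_k}(\beta|\Delta_k f|/|I_k|)|I_k|\le 1$ for every partition $(I_k)$; hence $\V^\phi_I(\beta f)\le 1$. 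The norm equivalence is then automatic: $\Volim\le \V$ trivially gives $\|\cdot\|_{\Volim^\phi_I}\le \|\cdot\|_{\V^\phi_I}$, while for any $\lambda>\|f\|_{\Volim^\phi_I}$ one has $\Volim^\phi_I(f/\lambda)\le 1$, hence $\V^\phi_I(\beta f/\lambda)\le 1$ by what was just proved, which yields $\|f\|_{\V^\phi_I}\le \lambda/\beta$; letting $\lambda\searrow\|f\|_{\Volim^\phi_I}$ gives $\|\cdot\|_{\V^\phi_I}\le \beta^{-1}\|\cdot\|_{\Volim^\phi_I}$.

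The main obstacle is the key step. The delicate point is the mismatch of scales in \aone{}: to compare $\phi(x,s_j)$ at a point $x\in I_k$ (with $x$ chosen to almost realize $\phi^-_{I_k}$) with a value taken on the small refined interval $I_j'\subset I_k$, the only ball containing both $x$ and $I_j'$ has diameter up to $|I_k|$, whereas the bound on $s_j$ available from the fine partition is of order $1/|I_j'|$ and can be much larger than $1/|I_k|$. Managing this mismatch, via either the convex-combination weights $\lambda_j$ or a separation into ``small'' and ``large'' intervals in $(I_k)$, is where the proof has to do real work.
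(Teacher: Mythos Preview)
Your overall architecture matches the paper's, but the key step is not established: you have correctly identified the scale mismatch, and it is genuinely fatal to the route you propose. To bound $\phi(x,s_j)$ by $\phi^+_{I_j'}(s_j)+1$ via \aone{} you need a ball containing both $x$ and $I_j'$; such a ball has measure of order $|I_k|$, so \aone{} requires $\phi^+_{I_j'}(s_j)\lesssim 1/|I_k|$. All you know from the fine partition is $\phi^+_{I_j'}(s_j)|I_j'|\le 1+\eta$, i.e.\ a bound of order $1/|I_j'|$, which can be arbitrarily larger. Neither the convex weights $\lambda_j$ nor a small/large splitting of $(I_k)$ repairs this: for a fixed large $I_k$ the sum $\sum_{j\in J_k}\phi(x,s_j)|I_j'|$ simply cannot be controlled by $\sum_{j\in J_k}\phi^+_{I_j'}(s_j)|I_j'|$ using \aone{} alone.

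The fix, which is what the paper does, is to avoid \aone{} entirely in the refinement step. Do not pick a point $x\in I_k$; work with $\phi^-_{I_k}$ itself. Since $\phi^-_{I_k}$ is an infimum of convex functions it need not be convex, so one uses the quasi-convexity inequality \cite[Corollary~2.2.2]{HarH19}: there is $\beta\in(0,1]$ with
\[
\phi^-_{I_k}\!\bigg(\beta\,\frac{|\Delta_k f|}{|I_k|}\bigg)|I_k|\;\le\;\sum_{j\in J_k}\phi^-_{I_k}(s_j)\,|I_j'|.
\]
Now the crucial, completely elementary observation: $I_j'\subset I_k$ implies $\phi^-_{I_k}(t)\le \phi^-_{I_j'}(t)\le \phi^+_{I_j'}(t)$ for every $t$. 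Hence the right-hand side is $\le \sum_{j\in J_k}\phi^+_{I_j'}(s_j)|I_j'|$, and summing over $k$ gives
\[
\sum_k \phi^-_{I_k}\!\bigg(\beta\,\frac{|\Delta_k f|}{|I_k|}\bigg)|I_k|\;\le\;\Volim^\phi_I(f)+\eta\;\le\;1+\eta.
\]
This is your key display but with an extra $\beta$ in the argument; that $\beta$ is harmless for the remainder of your argument, which is correct as written. In particular, each term is now at most $(1+\eta)/|I_k|$, so \aone{} applies at the \emph{right} scale $|I_k|$ to pass from $\phi^-_{I_k}$ to $\phi^+_{I_k}$, and the rest proceeds exactly as you outlined.
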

\begin{proof}
Let $\epsilon>0$ and choose $\delta>0$ such that 
\[
\sup_{|(I_k)|<\delta}
\sum_{k=1}^n \phi^+_{I_k} \bigg(\frac{|\Delta_kf|}{|I_k|}  \bigg) |I_k|
\le
\Volim^\phi_I(f) + \epsilon
\le 
1+\epsilon.
\]
Let $(I_k)$ be a partition of $I$ and $(I_k')$ be a subpartition of $(I_k)$ with $|(I_k')|<\delta$. 
As in Lemma~\ref{lem:stdGrowthLimit}, we find that 
\[
\phi_{I_k}^- \bigg(\beta \frac{|\Delta f(I_k)|}{|I_k|} \bigg) |I_k|
\le
\sum_{j=j_k}^{j_{k+1}-1}\frac{|I_j'|}{|I_k|} \phi_{I_k}^- \bigg(\frac{|\Delta f(I_j')|}{|I_j'|} \bigg)|I_k|
\le
\sum_{j=j_k}^{j_{k+1}-1} \phi_{I_j'}^- \bigg(\frac{|\Delta f(I_j')|}{|I_j'|} \bigg)|I_j'|,
\]
where we used the quasi-convexity property with constant $\beta$ of $\phi^-$ in the first step 
\cite[Corollary~2.2.2]{HarH19} (see also \cite{HarHJ_pp}).
Thus we conclude that 
\[
\sum_{k=1}^n\phi_{I_k}^- \bigg(\beta \frac{|\Delta f(I_k)|}{|I_k|} \bigg) |I_k|
\le
\sum_{j=1}^{j_{n+1}-1} \phi_{I_j'}^+ \bigg(\frac{|\Delta f(I_j')|}{|I_j'|} \bigg)|I_j'|
\le 
1+\epsilon.
\]
This holds for any partition $(I_k)$ and any $\epsilon>0$, so we deduce that 
\[
\sup_{(I_k)}
\sum_{k=1}^n\phi_{I_k}^- \bigg(\beta \frac{|\Delta f(I_k)|}{|I_k|} \bigg) |I_k|
\le
1.
\]
Since $\phi_{I_k}^-(\beta \frac{|\Delta f(I_k)|}{|I_k|})\le |I_k|^{-1}$, it follows from 
\aone{} that 
\[
\phi_{I_k}^+ \bigg(\beta^2 \frac{|\Delta f(I_k)|}{|I_k|} \bigg)
\le
\phi_{I_k}^- \bigg(\beta \frac{|\Delta f(I_k)|}{|I_k|} \bigg)+1.
\]
Here the second $\beta$ is from \aone{}. Hence
\[
V^\phi_I(\beta^2 f)
=
\sup_{(I_k)}
\sum_{k=1}^n\phi_{I_k}^+ \bigg(\beta^2 \frac{|\Delta f(I_k)|}{|I_k|} \bigg) |I_k|
\le
\sup_{(I_k)}
\sum_{k=1}^n\phi_{I_k}^- \bigg(\beta \frac{|\Delta f(I_k)|}{|I_k|} \bigg) |I_k| + |I|
\le
|I|+1.
\]
Then it follows from \ainc{1} that $V^\phi_I(\frac{\beta^2}{L(|I|+1)} f)\le 1$. 
The claim regarding the norm follows from this and $\Volim^\phi_I\le \V^\phi_I$ 
by homogeneity.
\end{proof}


\section{Connection to other spaces}\label{sect:connections}

A famous result regarding bounded variation functions in the Riesz sense is the Riesz representation lemma, 
which connects the Riesz seminorm with the $L^p$-norm of the derivative of the function. A similar phenomenon 
occurs in the generalized Orlicz case under suitable assumptions. 

We start with approximate equality. 
Note that we do not assume \ainc{} and \adec{}; thus we generalize also the previous results from the 
variable exponent case \cite{CasGR16}. Note that the assumption $f\in \AC(I)$ can be replaced by 
\ainc{}, since together with \azero{} it implies that the function is absolutely continuous. 

\begin{thm}
Let $\phi \in \Phiw(I)$ satisfy \azero{} and \aone{}. If $f\in \AC(I)$ and $f' \in L^\phi(I)$, then $f \in \RBV^\phi(I)$ and
\[
\|f\|_{\RBV^\phi(I)}\le c \|f'\|_{L^\phi(I)}.
\]
\end{thm}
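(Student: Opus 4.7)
The plan is to reduce the estimate to a modular inequality and then convert back. By homogeneity, it suffices to produce constants $\beta>0$ and $C$ depending only on $\phi$ and $|I|$ such that $\V^\phi_I(\beta f)\le C$ whenever $\|f'\|_{L^\phi(I)}\le 1$; Lemma~\ref{lem:modularnormab} then turns this modular bound into the desired quasi-norm inequality after undoing the normalization.

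So I assume $\rho_\phi(f')\le 1$ and fix an arbitrary partition $(I_k)$ of $I$. Since $f\in\AC(I)$, for each $k$,
\[
\frac{|\Delta_k f|}{|I_k|}=\bigg|\fint_{I_k} f'\,dx\bigg|\le \fint_{I_k}|f'|\,dx,
\]
so by monotonicity of $t\mapsto\phi^+_{I_k}(t)$ it suffices to control $\sum_k \phi^+_{I_k}\!\big(\beta\fint_{I_k}|f'|\big)|I_k|$ uniformly in $(I_k)$.

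The key step is a Jensen-type estimate from \azero{} and \aone{}: there exists $\beta\in(0,1]$, depending only on $\phi$, such that for every subinterval $J\subset I$,
\[
\phi^+_{J}\bigg(\beta\fint_{J}|f'|\,dx\bigg)\le \fint_{J}\phi(x,|f'(x)|)\,dx+1.
\]
To prove this, I would pick a near-minimizer $y\in J$ of $x\mapsto\phi(x,\fint_J|f'|)$ and split on cases. In the small case $\phi(y,\fint_J|f'|)\le K/|J|$ for the constant $K$ from \aone{}, condition \aone{} gives $\phi^+_J(\beta t)\le\phi^-_J(t)+1$ at $t=\fint_J|f'|$, and then the classical Jensen inequality applied to a convex representative of $\phi^-_J$ (which exists by \cite[Lemma~2.2.1]{HarH19}) pushes the average inside, yielding the claim after possibly shrinking $\beta$. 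In the large case the integral $\int_J\phi(x,|f'|)\,dx$ already dominates $\phi^+_J(\beta\fint_J|f'|)\,|J|$, via \aone{} plus \ainc{1}.

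Summing this estimate over the partition gives
\[
\sum_{k=1}^n \phi^+_{I_k}\bigg(\beta\fint_{I_k}|f'|\,dx\bigg)|I_k|\le \rho_\phi(f')+|I|\le 1+|I|,
\]
since $\sum_k\int_{I_k}\phi(x,|f'|)\,dx=\rho_\phi(f')$. Taking the supremum over partitions yields $\V^\phi_I(\beta f)\le 1+|I|$, and Lemma~\ref{lem:modularnormab} finishes the proof. The main obstacle is the Jensen-type step: without \vaone{} I cannot get a Jensen inequality with constant close to $1$ as in Theorem~\ref{thm:jensen}, but the additive error of $1$ per subinterval that \aone{} alone provides is harmless, since $\sum_k|I_k|=|I|$ is absorbed into the constant $C$ of the modular bound.
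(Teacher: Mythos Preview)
Your proposal is correct and follows essentially the same route as the paper: reduce to a modular bound via absolute continuity and a Jensen-type inequality with additive error $1$, sum over the partition, and convert back to norms via \ainc{1}. The only difference is packaging: the paper cites \cite[Theorem~4.3.2]{HarH19} for the Jensen step rather than sketching it, and applies \ainc{1} directly instead of invoking Lemma~\ref{lem:modularnormab}, but these amount to the same thing.
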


\begin{proof}
Let $\|f'\|_{L^\phi(I)}<1$. Since $f$ is absolutely continuous, 
\[
\frac{|\Delta f(I_k)|}{|I_k|}
=
\bigg|\fint_{I_k} f'\,dx\bigg|
\le
\fint_{I_k} |f'|\,dx.
\]
By \cite[Theorem~4.3.2]{HarH19} there exists $\beta>0$ such that 
\[
\phi^+_{I_k}\bigg(\beta\fint_{I_k} |f'|\,dx\bigg)
\le
\fint_{I_k} \phi(x, |f'|)\, dx+1.
\]
It follows that 
\[
\V^\phi_I(\beta f, (I_k)) \le 
\sum_k \phi^+_{I_k}\bigg(\beta\fint_{I_k} |f'|\,dx\bigg) |I_k|
\le 
\int_I \phi(x, |f'|)\, dx +|I|\le 1+|I|.
\]
Since this holds for any partition $(I_k)$, we find that $\V^\phi_{I}(\beta f)\le 1+|I|$. 
By \ainc{1}, $\V^\phi_{I}(\frac\beta{L(1+|I|)} f)\le \frac1{1+|I|}V^\phi_I(\beta f)\le 1$.
Hence $\|f\|_{\RBV^\phi(I)} \le \frac{L(1+|I|)}\beta$; the claim for general
$\|f'\|_\phi$ follows from this by the homogeneity of the norm. 
\end{proof}

We next derive the corresponding upper bound. Note that here we use only the 
limit $|(I_k)|\to 0$, so the result holds also with $\Volim^\phi_I$ in place of $\V^\phi_I$.

\begin{thm}\label{thm:lower-bound}
Let $\phi \in \Phiw(I)$.
If $f \in \RBV^\phi(I)\cap \AC(I)$, then $f' \in L^\phi(I)$ and 
\[
\|f'\|_{L^\phi(I)} \le \|f\|_{\RBV^\phi(I)}.
\]
\end{thm}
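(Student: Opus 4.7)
The plan is to run a Lebesgue-differentiation plus Fatou argument on a shrinking sequence of partitions, which matches the remark that the proof only needs the regime $|(I_k)|\to 0$.

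Fix $\lambda > \|f\|_{\RBV^\phi(I)}$, so that $\V^\phi_I(f/\lambda) \le 1$. The goal is to prove $\rho_\phi(f'/\lambda)\le 1$; then $\|f'\|_{L^\phi(I)}\le \lambda$, and letting $\lambda\downarrow \|f\|_{\RBV^\phi(I)}$ yields the claim. Since $f\in\AC(I)$, the derivative $f'$ exists a.e.\ and lies in $L^1(I)$, and for every closed subinterval $J\subset I$ we have $\Delta f(J)/|J|=\fint_J f'\,dt$.

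Take, say, the dyadic equipartitions $(I_k^{(n)})_{k=1}^{N_n}$ of $I$ with $|(I_k^{(n)})|\to 0$, and define the step functions
\[
F_n(x) := \phi^+_{I_k^{(n)}}\bigg(\frac{|\Delta_k^{(n)} f|}{\lambda|I_k^{(n)}|}\bigg)
\qquad\text{for } x\in I_k^{(n)}.
\]
By the definition of $\V^\phi_I$,
\[
\int_I F_n\, dx
=\sum_{k=1}^{N_n}\phi^+_{I_k^{(n)}}\bigg(\frac{|\Delta_k^{(n)} f|}{\lambda|I_k^{(n)}|}\bigg)|I_k^{(n)}|
\le \V^\phi_I(f/\lambda)\le 1.
\]
On the other hand, for a.e.\ $x\in I$ the Lebesgue differentiation theorem gives
\[
\frac{|\Delta_k^{(n)} f|}{|I_k^{(n)}|}
=\bigg|\fint_{I_k^{(n)}(x)} f'\,dt\bigg|
\xrightarrow[n\to\infty]{}|f'(x)|,
\]
where $I_k^{(n)}(x)$ denotes the (uniquely determined for a.e.\ $x$) interval in the $n$-th partition containing $x$.

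It remains to show that $\liminf_n F_n(x)\ge \phi(x,|f'(x)|/\lambda)$ a.e., after which Fatou's lemma gives $\rho_\phi(f'/\lambda)\le \liminf_n\int_I F_n\le 1$. For this, I use $\phi^+_{I_k^{(n)}}\ge \phi(x,\cdot)$ for $x\in I_k^{(n)}$ together with the monotonicity of $\phi(x,\cdot)$: for every $\epsilon\in(0,1)$, once $n$ is large enough that $|\Delta_k^{(n)} f|/|I_k^{(n)}|\ge (1-\epsilon)|f'(x)|$, we have
\[
F_n(x)\ge \phi\bigg(x,(1-\epsilon)\frac{|f'(x)|}{\lambda}\bigg),
\]
so $\liminf_n F_n(x)\ge \phi(x,(1-\epsilon)|f'(x)|/\lambda)$. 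Sending $\epsilon\to 0^+$ and invoking left-continuity of $\phi(x,\cdot)$ (available by replacing $\phi$ by an equivalent member of $\Phic(I)$ via \cite[Lemma~2.2.1]{HarH19}, as was done in Theorem~\ref{theo:B-space}; the Luxemburg-norm inequality with constant $1$ is then recovered by taking the equivalence constant $L\to 1$ from the left-continuous envelope) completes the step.

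The main obstacle is precisely this last step, i.e.\ turning the one-sided limit $\phi(x,|f'(x)|^-/\lambda)$ into $\phi(x,|f'(x)|/\lambda)$ without losing the sharp constant $1$ in the norm inequality; the rest (Lebesgue differentiation, partition-by-partition domination by $\V^\phi_I$, Fatou) is routine once the partitions have been set up.
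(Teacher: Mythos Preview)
Your overall approach---shrinking partitions, Lebesgue differentiation, the pointwise lower bound via $\phi^+_{I_k}\ge\phi(x,\cdot)$, then Fatou---is exactly the paper's. The paper also inserts the factor $\beta\in(0,1)$ (your $1-\epsilon$) to handle the one-sided convergence of the difference quotients.

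The only genuine issue is your last step. You aim for the modular inequality $\rho_\phi(f'/\lambda)\le 1$ and therefore try to pass from $\phi(x,(1-\epsilon)|f'(x)|/\lambda)$ to $\phi(x,|f'(x)|/\lambda)$ inside the integral; this needs left-continuity of $t\mapsto\phi(x,t)$, which $\Phiw$ does not guarantee. Your proposed repair via \cite[Lemma~2.2.1]{HarH19} does not work as stated: that lemma produces a convex $\psi\simeq\phi$ with a \emph{fixed} equivalence constant $L$ (coming from \ainc{1}), and there is no mechanism to send $L\to 1$.

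The paper avoids the whole difficulty by taking the limit at the \emph{norm} level rather than the modular level. From
\[
\int_I \phi\Big(x,(1-\epsilon)\tfrac{|f'|}{\lambda}\Big)\,dx \le 1
\]
one gets directly $\|f'\|_{L^\phi(I)}\le \lambda/(1-\epsilon)$ by the definition of the Luxemburg norm; now let $\epsilon\to 0^+$ and then $\lambda\downarrow\|f\|_{\RBV^\phi(I)}$. No left-continuity or equivalent $\Phi$-function is needed. With this small reorganization your proof is complete and coincides with the paper's.
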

\begin{proof} 
We assume first that $\V^\phi_{I}(f)\le 1$. 
Since $f\in \AC(I)$, the derivative $f'$ exists almost everywhere in $I$. 
Let $((I_k^n)_k)_n$ be a sequence of partitions of $I$ with $|(I_k^n)_k|\to 0$ as $n\to\infty$. 
Define a step-function
\[
F_n:= \sum_k \frac{\Delta f(I_k^n)}{|I_k^n|}\chi_{I_k^n}. 
\]
Since $\lim_n F_n = f'$ a.e.\ and $\phi$ is increasing, we see that $\phi(x, \beta\, |f'(x)|)\le \liminf_n \phi(x, |F_n(x)|)$ for 
a.e.\ $x\in I$ and fixed $\beta\in (0,1)$. Hence Fatou's lemma implies that 
\[
\int_I \phi(x, \beta\, |f'|)\, dx
\le
\liminf_n \int_I \phi(x, |F_n|)\, dx
\le
\liminf_n \sum_k \int_{I_k^n} \phi_{I_k^n}^+(|F_n|)\, dx.
\]
By the definition of $F_n$, 
\[
\int_{I_k^n} \phi_{I_k^n}^+(|F_n|)\, dx
= 
\phi^+_{I_k^n} \bigg(\frac{|\Delta f(I_k^n)|}{|I_k^n|} \bigg)|I_k^n|.
\]
Thus
\begin{align*}
\int_I \phi(x, \beta |f'|)\, dx
\le
\liminf_n \V(f, (I_k^n))
\le
\V^\phi_{I}(f)\le 1.
\end{align*}
This implies that $\|f'\|_{\phi}\le \frac1\beta$ and the general case follows by homogeneity as $\beta\to 1^-$. 
\end{proof}

Combining the previous two results, we obtain the following:
\begin{cor}\label{cor:bound}
Let $\phi \in \Phiw(I)$ satisfy \azero{} and \aone{}.
Then 
\[
\RBV^\phi(I)\cap L^\phi(I)\cap \AC(I) = W^{1,\phi}(I).
\]
\end{cor}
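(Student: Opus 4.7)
The plan is to prove the set equality by showing the two inclusions separately, each of which is essentially immediate from one of the two theorems just established, provided we handle the identification between Sobolev functions and their absolutely continuous representatives.

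For the inclusion $\RBV^\phi(I)\cap L^\phi(I)\cap \AC(I) \subseteq W^{1,\phi}(I)$, I would take an arbitrary $f$ in the left-hand side. Since $f \in \AC(I)$, its classical derivative exists almost everywhere and coincides with the weak derivative on the interval. Theorem~\ref{thm:lower-bound} applied to $f \in \RBV^\phi(I)\cap \AC(I)$ yields $f' \in L^\phi(I)$ with $\|f'\|_{L^\phi(I)}\le \|f\|_{\RBV^\phi(I)}$. Together with $f \in L^\phi(I)$, this gives $f \in W^{1,\phi}(I)$ directly from the definition of the Orlicz--Sobolev space.

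For the reverse inclusion $W^{1,\phi}(I) \subseteq \RBV^\phi(I)\cap L^\phi(I)\cap \AC(I)$, the membership $f \in L^\phi(I)$ is built into the definition, so I need to produce both absolute continuity and the Riesz $\phi$-variation bound. The first step is the embedding $L^\phi(I) \hookrightarrow L^1(I)$ on the bounded interval $I$: by \azero{} we have $\phi(x,1/\beta)\ge 1$, and combining this with the almost increasing property of $t\mapsto \phi(x,t)/t$ from the definition of $\Phiw$ gives a linear lower bound $\phi(x,t)\ge c\,t$ for $t\ge 1/\beta$, from which $L^\phi(I)\hookrightarrow L^1(I)$ follows on a bounded interval. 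Applying this to the weak derivative $f'$, we obtain $f'\in L^1(I)$, and one-dimensional Sobolev theory then gives that $f$ has an absolutely continuous representative (which I then identify with $f$ itself). With $f\in \AC(I)$ and $f'\in L^\phi(I)$ in hand, the first theorem of this section provides $f\in \RBV^\phi(I)$ with $\|f\|_{\RBV^\phi(I)}\le c\|f'\|_{L^\phi(I)}$, completing the inclusion.

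The only real obstacle is the representative issue, since the paper deliberately works with pointwise-defined functions rather than equivalence classes: the statement $W^{1,\phi}(I)\subseteq \AC(I)$ must be read as ``every Sobolev function admits an $\AC$ representative'', and the $L^\phi\hookrightarrow L^1$ embedding is precisely what lets one invoke the standard one-dimensional argument for this. Everything else is bookkeeping wrapping the two norm-equivalences into a set equality.
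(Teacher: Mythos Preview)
Your proposal is correct and follows exactly the approach the paper intends: the paper simply writes ``Combining the previous two results, we obtain the following'' and leaves the details implicit, while you spell out both inclusions using the first theorem of the section for $W^{1,\phi}\subseteq\RBV^\phi$ and Theorem~\ref{thm:lower-bound} for the reverse. Your additional care with the $L^\phi\hookrightarrow L^1$ embedding (via \azero{} and \ainc{1}) and the absolutely continuous representative is precisely what the paper's one-line justification sweeps under the rug.
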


We next derive an exact formula for the Riesz semi-norm. In this case, we have to restrict our attention to the 
$\limsup$-version $\Volim^\phi$. This result has no analogue in \cite{CasGR16}, so it is new even in the 
variable exponent case. 

Following \cite[Section~3.2]{AmbFP00} we consider functions $f$ of bounded variation on the real line 
whose derivative can be described as a signed measure $Df$ with finite total variation, $|Df|(I)<\infty$. 
The measure $Df$  can be split into an absolutely continuous part represented by $f'\,dx$ and a 
singular part $D^sf$ (with respect to the Lebesgue measure). 
In \cite[(3.24)]{AmbFP00}, it is shown that 
\[
\inf_{g=f\text{ a.e.}} \V_I^1(g) =: \EV^1_I(f) = |Df|(I);
\]
the left-hand side is called the \textit{essential variation}. Without the almost everywhere equivalence, 
the equality does not hold since 
we may take a function $f$ equal to zero except at one point so that $V_I^1(f)>0=|Df|(I)$. Functions for which 
the essential variation equals the variation (i.e.\ $\V^1_I=\EV^1_I$) 
are called \textit{good representatives} in \cite{AmbFP00}. 

A left-continuous function of bounded variation is an example of a good representative and can be expressed as $f(x)=Df([a, x)) + f(a)$. Furthermore, by \cite[Theorem~3.28]{AmbFP00}, the left-continuous representative of a 
function of bounded variation is a good representative. For simplicity, we restrict our attention to 
left-continuous functions. Note that \cite{AmbFP00} defined variations 
on open intervals. Where necessary, we can treat the first interval $[a, x]$ separately by a direct calculation. 

Following \cite{EleHH_pp}, we define, for $\phi\in\Phic$,
\[
\phi'_\infty(x):= \lim_{t\to\infty} \frac{\phi(x, t)}t.
\]
Note that the limit exists since $\frac{\phi(x, t)}t$ is increasing. If $\phi$ is differentiable and convex, then $\displaystyle \phi'_\infty(x) = \lim_{t\to\infty} \phi'(x,t)$, hence the notation. 

\begin{thm}[Riesz representation theorem]\label{thm:preciseBV}
Let $f\in \BV(I)$ be left-continuous. If $\phi\in \Phic(I)$ satisfies \vaone{} and \dec{}, then 
\[
\Volim^\phi_{I}(f) = \int_I \phi(x, |f'|)\, dx + \int_I \phi'_\infty \, d|D^sf|. 
\]
\end{thm}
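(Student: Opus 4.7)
The plan is to prove both inequalities $\Volim^\phi_I(f) \le RHS$ and $\Volim^\phi_I(f) \ge RHS$ separately, using Lemma~\ref{lem:phiMinusLimit} to freely interchange $\phi^+_{I_k}$ and $\phi^-_{I_k}$ in partition sums, and Lemma~\ref{lem:stdGrowthLimit} style arguments where convenient. Because $f$ is a left-continuous good representative of a $\BV$-function, for $x<y$ one has $f(y)-f(x)=Df([x,y))$, so $|\Delta_k f|=|Df([a_k,b_k))|$; this agrees with $|Df(I_k)|$ outside the countable atom set of $Df$ and may be identified with it in every limit over fine partitions. The Lebesgue decomposition $Df = f'\,dx + D^sf$ then gives two summands that must be extracted in a compatible way.

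For the upper bound $\Volim^\phi_I(f) \le RHS$, I fix a fine partition $(I_k)$, choose sample points $x_k \in I_k$, and combine convexity of $\phi(x_k,\cdot)$ with $\phi(x_k,0)=0$: any convex function vanishing at $0$ with recession $\phi'_\infty(x_k)$ satisfies $\phi(x_k,t+s)\le \phi(x_k,t)+\phi'_\infty(x_k)\,s$. Applied with $t = \fint_{I_k}|f'|\,dy$ and $s = |D^sf|(I_k)/|I_k|$ this yields
\[
\phi\bigg(x_k, \frac{|Df(I_k)|}{|I_k|}\bigg)|I_k| \le \phi\bigg(x_k, \fint_{I_k}|f'|\,dy\bigg)|I_k| + \phi'_\infty(x_k)\,|D^sf|(I_k).
\]
Summing over $k$ and letting $|(I_k)|\to 0$, the first sum converges to $\int_I\phi(x,|f'|)\,dx$ via Jensen's inequality (Theorem~\ref{thm:jensen}), Lebesgue differentiation of $|f'|$, and dominated convergence with a uniform bound supplied by \dec{}; the second is a Riemann--Stieltjes sum that tends to $\int_I\phi'_\infty\,d|D^sf|$, using continuity of $\phi'_\infty$ (deducible from \vaone{} and \dec{}). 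Finally, \vaone{} upgrades the pointwise estimate at $x_k$ to one with $\phi^+_{I_k}$ on the left at the cost of a vanishing error $\omega(|I_k|)$.

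For the lower bound, it suffices by Lemma~\ref{lem:phiMinusLimit} to show $\Vulim^\phi_I(f)\ge RHS$, and I would construct explicit fine partitions whose $\phi^-$-sums approach $RHS$. The key ingredient is differentiation of the mutually singular measures: for $dx$-a.e.\ $x$, $x$ is a Lebesgue point of $f'$ and $|D^sf|(B_r(x))/r \to 0$, whereas for $|D^sf|$-a.e.\ $x$, $|Df|(B_r(x))/r \to \infty$. Partitions can then be designed so each $I_k$ is either \emph{regular} with $|Df(I_k)|/|I_k|$ close to $|f'(x_k)|$, or \emph{singular} with $|Df(I_k)|/|I_k|$ very large and $|Df(I_k)|\approx|D^sf|(I_k)$. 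Regular intervals contribute a Riemann sum for $\int_I\phi(x,|f'|)\,dx$; on singular intervals, the secant identity $\phi(x_k,t)/t\to \phi'_\infty(x_k)$ forces $\phi^-_{I_k}(|Df(I_k)|/|I_k|)\,|I_k| \approx \phi'_\infty(x_k)\,|D^sf|(I_k)$, whose sum approximates $\int_I\phi'_\infty\,d|D^sf|$.

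The main obstacle is this lower-bound construction: one must build partitions that cover the potentially very thin support of $D^sf$ with intervals small enough for the secant approximation $\phi(x,t)/t\approx\phi'_\infty(x)$ to be uniform across $I_k$ (via \vaone{} and \dec{}), yet concentrating essentially all the $|D^sf|$-mass on those singular intervals so that $|Df(I_k)|/|I_k|$ is genuinely large there. When $D^sf$ is supported on a Cantor-like set, this forces a Vitali-type covering argument intertwined with the \vaone{}-continuity estimates; additionally, the quasi-convexity constant of $\phi^-_{I_k}$ (from \cite[Corollary~2.2.2]{HarH19}) must be controlled so that it does not obstruct the limit in the $\phi^-$-sum.
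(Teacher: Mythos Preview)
Your overall architecture matches the paper's: prove the two inequalities separately, split each partition sum into an absolutely continuous piece and a singular piece, and use Lemma~\ref{lem:phiMinusLimit} to move between $\phi^+_{I_k}$ and $\phi^-_{I_k}$. The splitting inequality $\phi(x_k,t+s)\le\phi(x_k,t)+\phi'_\infty(x_k)\,s$ is also correct. But there is a real gap in the upper bound: the claim that $\phi'_\infty$ is continuous, ``deducible from \vaone{} and \dec{}'', is false. In the variable exponent case $\phi(x,t)=t^{p(x)}$ one has $\phi'_\infty(x)=1$ where $p(x)=1$ and $\phi'_\infty(x)=\infty$ where $p(x)>1$, so $\phi'_\infty$ jumps at $\partial\{p=1\}$ even though $\phi(\cdot,t)$ is continuous for each fixed $t$; the limit $t\to\infty$ is not uniform and continuity does not survive. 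Your Riemann--Stieltjes argument for $\sum_k\phi'_\infty(x_k)\,|D^sf|(I_k)\to\int_I\phi'_\infty\,d|D^sf|$ therefore fails: a sample point $x_k$ can have $\phi'_\infty(x_k)=\infty$ while $|D^sf|\!\upharpoonright_{I_k}$ is carried by a point where $\phi'_\infty$ is finite, so the sum overshoots the integral.

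The paper avoids any continuity of $\phi'_\infty$. For the upper bound it works with $\phi^-_{I_k}$ (via Lemma~\ref{lem:phiMinusLimit}) and a $\theta$-weighted splitting from \dec{q}, then uses only the one-sided estimate $\phi^-_{I_k}(s)\le(\phi'_\infty)^-_{I_k}\,s$, so the singular contribution is $\sum_k(\phi'_\infty)^-_{I_k}\,|D^sf|(I_k)\le\int_I\phi'_\infty\,d|D^sf|$ by pure pointwise domination. For the lower bound it does \emph{not} try to make $\phi(x,t)/t$ uniformly close to $\phi'_\infty(x)$ on each singular interval; instead it fixes a level $\lambda$, uses the convexity inequality $\phi^+_{A_{i,k}}(t)\ge\tfrac{\phi^+_{A_{i,k}}(\lambda)}{\lambda}\,t-\phi^+_I(\lambda)$ to linearize, sums to obtain $\int_A\tfrac{\phi(x,\lambda)}{\lambda}\,d|D^sf|$ up to $O(1/\lambda)$ errors, and only then sends $\lambda\to\infty$ by monotone convergence of $\tfrac{\phi(x,\lambda)}{\lambda}\nearrow\phi'_\infty(x)$. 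This $\lambda$-freezing device is exactly what replaces the uniform secant approximation you invoke, and it is what makes the argument go through when $\phi'_\infty$ is discontinuous or infinite on part of $I$. Your lower-bound sketch could be repaired along these lines; the upper bound needs you to drop the sample-point Riemann--Stieltjes idea in favour of the infimum $(\phi'_\infty)^-_{I_k}$.
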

\begin{proof}
We prove the inequality ``$\le$'' and assume that the right-hand side is finite.
By Lemma~\ref{lem:phiMinusLimit}, we may replace $\Volim^\phi_{I}(f)$ by $\underline V^\phi_I$ 
for the lower bound. Let $\epsilon>0$ and choose $\delta\in (0, \epsilon)$ such that 
\[
\int_A \phi(x, |f'|)\, dx < \epsilon
\]
for any set $A\subset I$ with $|A|< \delta$. Since the support of the singular part 
$D^sf$ has measure zero we can choose 
by the definition of the Lebesgue measure 
a finite union $A:=\cup_{i=1}^m [a_i, a_i')$ with $|A|< \delta$ and 
\begin{equation}\label{eq:singular}
\int_{I\setminus A} \phi'_\infty\, d|D^sf| < \epsilon. 
\end{equation}

By left-continuity, $f(x)-f(y)=Df([a, x))-Df([a, y)) = Df([y, x))$ for $x>y$. Thus
\[
\frac{|\Delta f(I_k)|}{|I_k|}
=
\bigg|\fint_{I_k} f'\, dx + \frac{D^sf(I_k)}{|I_k|}\bigg|
\le
\fint_{I_k} |f'|\,d x + \frac{|D^sf|(I_k)}{|I_k|}.
\]
Since $t+s\le \max\{(1+\theta)t, (1+\theta^{-1})s\}$ we obtain by 
\dec{q} that 
\begin{equation}\label{eq:alpha+beta}
\phi^-_B(t+s) 
\le 
(1+\theta)^q \phi^-_B(t) + (1+\theta^{-1})^q\phi^-_B(s). 
\end{equation}
By the previous estimates, Theorem~\ref{thm:jensen} (with $\beta:=\frac1{1+\omega(|I_k|)}$) and $\phi(x, t)\le \phi'_\infty(x)t$ we obtain
\begin{align*}
\phi^-_{I_k}\bigg(\frac{|\Delta f(I_k)|}{|I_k|}\bigg)
& \le
(1+\theta)^q \phi^-_{I_k}\bigg(\fint_{I_k} |f'|\,d x\bigg) 
+ (1+\theta^{-1})^q \phi^-_{I_k}\bigg(\frac{|D^sf|(I_k)}{|I_k|}\bigg)\\
&\le
(\tfrac{1+\theta}\beta)^q \fint_{I_k} \phi^-_{I_k}(|f'|)+\omega(|I_k|)\,dx + (1+\theta^{-1})^q\frac{|D^sf|(I_k)}{|I_k|}(\phi'_\infty)^-_{I_k}.
\end{align*}

We first apply the previous inequality to the set $A_i:=[a_i, a_i')$ from $A$ defined above, 
assumed to be so small that $\omega(|A_i|)\le \epsilon$, 
and choose $\theta:=\epsilon^{-1/(2q)}$:
\begin{align*}
\sum_{i} \phi^-_{A_i}\bigg(\frac{|\Delta f(A_i)|}{|A_i|}\bigg)|A_i| 
&\le
(\tfrac{1+\theta}\beta)^q\int_{A} \phi(x, |f'|)\,d x +\epsilon + (1+\theta^{-1})^q\int_{A} (\phi'_\infty)^-_{I_k}\, d|D^sf|\\
&\qquad\le
(1+\epsilon^{-1/(2q)})^q \beta^{-q}\epsilon(1+|I|) + (1+\epsilon^{1/(2q)})^q\int_I \phi'_\infty\, d|D^sf|.
\end{align*}
Choose sufficiently small complementary closed intervals $B_i=[b_i, b_i')$, i.e.\ $\cup_i A_i + \cup_i B_i=I\setminus\{b\}$ and 
$A_i\cap B_j=\emptyset$, such that $\omega(B_i)\le \epsilon$. We use the same estimate but now choose $\theta:=\epsilon^{1/(2q)}$ 
and use \eqref{eq:singular} to obtain 
\begin{align*}
\sum_{i}
\phi^-_{B_i}\bigg(\frac{|\Delta f(B_i)|}{|B_i|}\bigg)|B_i|
&\le
(1+\epsilon^{1/(2q)})^q\beta^{-q}\int_I \phi(x, |f'|)+\epsilon\,dx  + (1+\epsilon^{-1/(2q)})^q\epsilon.
\end{align*}
Adding these two estimates and letting $\epsilon\to 0$ and $\beta\to 1^-$, we obtain that 
\[
\Volim^\phi_{I}(f) = \Vulim^\phi_{I}(f) \le \int_I \phi(x, |f'|)\, dx + \int_I \phi'_\infty\, d|D^sf|.
\]

For the opposite inequality we use $\Volim^\phi_I$ and start by observing that 
\[
\frac{|\Delta f(B_i)|}{|B_i|}
\ge
\bigg|\fint_{B_i} f'\,d x\bigg| - \frac{|D^sf|(B_i)}{|B_i|}.
\]
This time we set $t=u-s$ in \eqref{eq:alpha+beta} and use the resulting inequality 
\[
\phi^+_B(u-s) \ge
(1+\theta)^{-q} \phi^+_B(u) - \theta^{-q}\phi^+_B(s). 
\]
With $B_i$ as before, we now obtain that 
\begin{align*}
\sum_{i}
\phi^+_{B_i}\bigg(\frac{|\Delta f(B_i)|}{|B_i|}\bigg)|B_i|
&\ge
(1+\theta)^{-q}\sum_{i} \phi^+_{B_i}\bigg(\bigg|\fint_{B_i} f'\,d x\bigg|\bigg)|B_i| 
- \theta^{-q} \epsilon \\
&\ge
(1+\theta)^{-q}\bigg[\int_I\phi(x, |f'|)\, dx - \epsilon \bigg] 
- \theta^{-q} \epsilon,
\end{align*}
where the second inequality follows as in Theorem~\ref{thm:lower-bound} 
(possibly after restricting $\delta$ to a smaller value). 
In the case $\int_I\phi(x, |f'|)\, dx = \infty$, 
we replace the square bracket with $\frac1\epsilon$ and obtain a lower bound tending to $\infty$. Otherwise, we choose $\theta:=\epsilon^{1/(2q)}$ and continue with the estimate of the 
singular part of the derivative. 

Fix $\lambda>1$. Assume that $\delta\le \lambda^{-2q}$ so that $|A|\le \lambda^{-2q}$. 
By absolute continuity of the non-singular part, we may assume $\delta$ is so small that 
\begin{equation}\label{eq:Df}
|Df|(A) \le |D^sf|(A) + \frac1 {\phi^+_I(\lambda)}.
\end{equation}
Since $u$ is a good representative, $V_{A_i}^1(f) = |Df|(A_i)$, which implies that 
\[
\sum_k |\Delta f(A_{i,k})| > |Df|(A_i) - \frac1 {i_{\max}\phi^+_I(\lambda)},
\]
for any subpartition $(A_{i,k})$ of $A_i$ with sufficiently small $\max\{|A_{i,k}|\}$, 
where $i_{\max}$ is the number of intervals $A_i$. 
Now if $\frac{|\Delta f(A_{i,k})|}{|A_{i,k}|}\ge \lambda$, then \inc{1} (from the convexity of $\phi$) 
implies that 
\[
\phi^+_{A_{i,k}}\bigg(\frac{|\Delta f(A_{i,k})|}{|A_{i,k}|}\bigg) 
\ge 
\frac{\phi^+_{A_{i,k}}(\lambda)}{\lambda} \frac{|\Delta f(A_{i,k})|}{|A_{i,k}|}.
\]
If, on the other hand, $\frac{|\Delta f(A_{i,k})|}{|A_{i,k}|}< \lambda$, then 
\[
\phi^+_{A_{i,k}}\bigg(\frac{|\Delta f(A_{i,k})|}{|A_{i,k}|}\bigg)
\ge 0
\ge
\frac{\phi^+_{A_{i,k}}(\lambda)}{\lambda} \frac{|\Delta f(A_{i,k})|}{|A_{i,k}|}
- 
\phi^+_{A_{i,k}}(\lambda). 
\]
Therefore in either case we have the inequality 
\[
\phi^+_{A_{i,k}}\bigg(\frac{|\Delta f(A_{i,k})|}{|A_{i,k}|}\bigg) 
\ge 
\frac{\phi^+_{A_{i,k}}(\lambda)}{\lambda} \frac{|\Delta f(A_{i,k})|}{|A_{i,k}|} 
- \phi^+_I(\lambda).
\]
By \dec{q}, \azero{} and $|A_I|\le \delta\le \lambda^{-2q}$, 
we obtain $\phi^+_{A_i}(\lambda)\le (\lambda/\beta)^q \phi^+_{A_i}(\beta) \le \frac K{|A_i|}$. Hence by \azero{}, \vaone{} 
and $\lambda>1$, we see that 
$\phi^+_{A_i}(\lambda) \le (1+\epsilon)\phi^-_{A_i}(\lambda) \le (1+\epsilon)\phi^+_{A_{i,k}}(\lambda)$.
We conclude that 
\begin{align*}
\sum_{i, k}
\phi^+_{A_{i,k}}\bigg(\frac{|\Delta f(A_{i,k})|}{|A_{i,k}|}\bigg)|A_{i,k}|
&\ge
\sum_i \sum_k
\bigg(\frac{\phi^+_{A_{i,k}}(\lambda)}{\lambda} |\Delta f(A_{i,k})| - \phi^+_I(\lambda)|A_{i,k}| \bigg)\\
&\ge
\sum_i 
\frac{\phi^+_{A_{i}}(\lambda)}{(1+\epsilon)\lambda}  \bigg(|Df|(A_i) - \frac1 {i_{\max}\phi^+_I(\lambda)} \bigg) - \phi^+_I(\lambda)|A| \\
&\ge
\sum_{i}
\frac{\phi^+_{A_i}(\lambda)}{(1+\epsilon)\lambda} |Df|(A_i)- \frac1\lambda - \lambda^{-2q}\phi^+_I(\lambda).
\end{align*}
Again, by \dec{q} and \azero{}, $\lambda^{-2q}\phi^+_I(\lambda) \le \lambda^{-q}\phi^+_I(1)\le \frac c\lambda$.

Next we observe by \eqref{eq:Df} that
\begin{align*}
\sum_{i} \frac{\phi^+_{A_i}(\lambda)}{\lambda} |Df|(A_i)
&\ge 
\sum_{i} \frac{\phi^+_{A_i}(\lambda)}{\lambda} |D^sf|(A_i)
-
\frac{\phi^+_I(\lambda)}{\lambda} \sum_{i}\big[|Df|(A_i)-|D^sf|(A_i)\big]\\
& \ge
\sum_{i} \frac{\phi^+_{A_i}(\lambda)}{\lambda} |D^sf|(A_i) - \frac 1\lambda
=
\int_I \sum_{i} \frac{\phi^+_{A_i}(\lambda)}{\lambda} \chi_{A_i} \, d|D^sf| - \frac 1\lambda.
\end{align*}
Since $\phi^+_{A_i}(\lambda)\ge \phi(x,\lambda)$ for every $x\in A_i$, we obtain that 
\begin{align*}
\sum_{i,k}
\phi^+_{A_{i,k}}\bigg(\frac{|\Delta f(A_{i,k})|}{|A_{i,k}|}\bigg)|A_{i,k}|
&\ge
\int_A \frac{\phi(x, \lambda)}{(1+\epsilon)\lambda}\, d|D^sf| - \frac c\lambda.
\end{align*}
Finally, we combine the estimate over $A$ and $B$ and have thus show that
\[
\Volim^\phi_{I}(f) 
\ge 
(1+\epsilon^{1/(2q})^{-q} \int_I \phi(x, |f'|)\, dx + 
\int_A \frac{\phi(x, \lambda)}{(1+\epsilon)\lambda}\, d|D^sf| 
 - \frac c\lambda - c\sqrt\epsilon. 
\]
We obtain the desired lower bound as $\epsilon\to 0$ and $\lambda\to \infty$ by monotone convergence, 
since $\frac{\phi(x, \lambda)}{\lambda} \nearrow \phi'_\infty(x)$, also using \eqref{eq:singular}. 
\end{proof}

In the situation of the previous theorem, we precisely regain the $\phi$-norm of the derivative of 
the function. Note that this result is new also in the variable exponent case. 
Furthermore, Example~\ref{eg:limitDifferent} shows that the result does not hold for 
$\V^\phi_I$. 

\begin{cor}\label{cor:preciseW}
Let $f\in \AC(I)$. If $\phi\in \Phic(I)$ satisfies \vaone{} and \dec{}, then 
\[
\Volim^\phi_{I}(f) = \int_I \phi(x, |f'|)\, dx.
\]
\end{cor}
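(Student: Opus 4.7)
The plan is to deduce this directly from the Riesz representation theorem (Theorem~\ref{thm:preciseBV}). The only work is to verify that the hypotheses of that theorem are met by any $f \in \AC(I)$, and to observe that the singular term then drops out.

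First I would note the two elementary facts from standard $\BV$-theory on a bounded closed interval $I=[a,b]$: absolute continuity implies continuity (hence left-continuity), and $f \in \AC(I)$ implies $f \in \BV(I)$ with $Df$ absolutely continuous with respect to Lebesgue measure, so that the singular part $D^s f$ vanishes identically and $|Df|(I) = \int_I |f'|\, dx < \infty$. This places $f$ in the class to which Theorem~\ref{thm:preciseBV} applies.

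With these verifications in hand, Theorem~\ref{thm:preciseBV} yields
\[
\Volim^\phi_{I}(f) = \int_I \phi(x, |f'|)\, dx + \int_I \phi'_\infty \, d|D^sf|,
\]
and since $|D^sf| \equiv 0$ the second term vanishes, giving the stated identity.

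There is no real obstacle here, since the heavy lifting has already been done in Theorem~\ref{thm:preciseBV}; the corollary amounts to recording the absolutely continuous case as a direct specialization. The one thing worth emphasizing in the write-up is that the hypothesis $f \in \BV(I)$ of the parent theorem is automatic on a closed bounded interval, so no separate assumption is needed in the corollary.
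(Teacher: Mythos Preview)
Your proposal is correct and matches the paper's own treatment: the corollary is stated immediately after Theorem~\ref{thm:preciseBV} with no separate proof, precisely because it is the specialization to $f\in\AC(I)$, where $D^sf=0$ and left-continuity is automatic. Your verification that the hypotheses of the parent theorem are met is exactly the intended justification.
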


We conclude by commenting on the corollaries from the introduction. 
If $p:\Omega\to [1,\infty)$ is a bounded variable exponent, then \vaone{} is equivalent to
the strong $\log$-Hölder condition. Further, $\phi'_\infty = 1+\infty \chi_{\{p>1\}}$ so that 
\[
\int_I \phi'_\infty\, d|D^sf| = \int_I 1+\infty \chi_{\{p>1\}}\, d|D^sf|
=|D^sf|(\{p=1\})
\]
when $\chi_{\{p>1\}}=0$ $|D^sf|$-a.e. 
In the double phase case $\phi(x,t)=t +a(x)t^q$, we similarly obtain 
$\phi'_\infty = 1+\infty \chi_{\{a>0\}}$, and the corollary follows. 
Finally, in the Orlicz case $\phi'_\infty$ is a constant. If the constant is infinity, 
then the singular part must vanish in order that $\infty |D^sf|(I)$ be finite, and so the 
function is absolutely continuous.


\bigskip

 \noindent\small{\textsc{P. Hästö}\\ 
Department of Mathematics and Statistics,
FI-20014 University of Turku, Finland}\\
\footnotesize{\texttt{peter.hasto@utu.fi}}\\

\noindent\small{\textsc{J. Juusti}}\\
\small{Department of Mathematics and Statistics,
FI-20014 University of Turku, Finland}\\
\footnotesize{\texttt{jthjuu@utu.fi}}\\

\noindent\small{\textsc{H. Rafeiro}}\\
\small{Department of Mathematical Sciences,
United Arab Emirates University, College of Science, UAE}\\
\footnotesize{\texttt{rafeiro@uaeu.ac.ae}}\\

\newpage
\appendix

\end{document}